\numberwithin{equation}{section}
\newtheorem{thm}{Theorem}[section]
\newtheorem{prop}[thm]{Proposition}
\newtheorem{lem}[thm]{Lemma}
\newtheorem{cor}[thm]{Corollary}
\newtheorem{ithm}{Theorem}
\newtheorem*{iprob*}{Problem}
\theoremstyle{definition}
\newtheorem{rem}[thm]{Remark}
\newtheorem{exam}[thm]{Example}
\newcommand{\PP}{\mathbf{P}}
\newcommand{\RR}{\mathbf{R}}
\newcommand{\CC}{\mathbf{C}}
\newcommand{\varSL}{\mathrm{SL}}
\newcommand{\hc}{\mathrm{H}_{\mathrm{c}}}
\newcommand{\hcb}{\mathrm{H}_{\mathrm{cb}}}
\newcommand{\hm}{\mathrm{H}_{\mathrm{m}}}
\newcommand{\hh}{\mathrm{H}}
\newcommand{\bu}{\bullet}
\newcommand{\lol}{L}
\newcommand{\ei}{{{}^I\!E}}
\newcommand{\eii}{{{}^{I\!I}\!E}}
\newcommand{\ee}{E}
\newcommand{\difh}{\vec{d}}
\newcommand{\difv}{d^\uparrow}
\newcommand{\inv}{^{-1}}
\newcommand{\lra}{\longrightarrow}
\newcommand{\wt}{\widetilde}
\DeclareMathOperator{\rank}{rank}
\title[The cohomology of semi-simple Lie groups, viewed from infinity]{The cohomology of semi-simple Lie groups,\\ viewed from infinity}
\date{November 2020}
\author[Nicolas Monod]{Nicolas Monod}
\address{EPFL, Switzerland}
\begin{document}

\begin{abstract}
We prove that the cohomology of semi-simple Lie groups admits boundary values, which are measurable cocycles on the Furstenberg boundary. This generalises known invariants such as the Maslov index on Shilov boundaries, the Euler class on projective space, or the hyperbolic ideal volume on spheres.

In rank one, this leads to an isomorphism between the cohomology of the group and of this boundary model. In higher rank, additional classes appear, which we determine completely.
\end{abstract}
\maketitle


\section{Introduction}

The cohomology of a semi-simple Lie group $G$ can be defined, and computed, by a great variety of different methods. For instance, it is often called \emph{continuous} cohomology and denoted by $\hc^\bu(G)$ when realised by continuous cochains on $G$ or on the associated symmetric space. Continuity can be replaced by smoothness, or relaxed to local integrability, even to just measurability.

The van Est isomorphism identifies $\hc^\bu(G)$ with relative Lie algebra cohomology. This then leads to another identification with the cohomology of a \emph{space}, the so-called compact dual symmetric space. Yet another model is the algebraic cohomology of Wigner~\cite{Wigner_PhD, Wigner70}. We recommend the introductions of~\cite{Austin-Moore} and~\cite{Wagemann-Wockel} for modern overviews of these developments, which started in the 1950s but witness contemporary progress. Classical textbooks are~\cite{Guichardet_coho, Borel-Wallach}.

\medskip
Each model has advantages for different applications. The aim of this article is to introduce another viewpoint, placing ourselves on the \emph{Furstenberg boundary} $G/P$ of $G$. This boundary classifies the topological dynamics of $G$ to some extent. It is a homogeneous projective variety for $G$ which covers all other such varieties, among which familiar Grassmannians supporting classical characteristic classes. When viewed instead as a \emph{measurable} $G$-space, this Furstenberg boundary is also a central tool in rigidity theory ever since Furstenberg and Margulis made striking use of it. In addition, $G/P$ is geometrically the space of Weyl chambers at infinity, and as such describes the generic part of the visual boundary at infinity of the symmetric space of $G$.

Our first result shows that the situation is ideal for rank one groups:

\begin{ithm}\label{thm:rk-one}
Let $G$ be a connected semi-simple Lie group of rank one with finite center. Let $P<G$ be a parabolic subgroup. Then the continuous cohomology $\hc^\bu(G)$ is realised by the complex
$$0 \lra \lol(G/P)^G \lra \lol((G/P)^2)^G  \lra \cdots \lra \lol((G/P)^{n+1})^G \lra \cdots $$
of $G$-invariant measurable function classes on the Furstenberg boundary $G/P$.
\end{ithm}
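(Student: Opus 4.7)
The plan is to realise the complex $L((G/P)^{\bu+1})^G$ as the $G$-invariants of a resolution of the trivial module $\RR$ by relatively injective $G$-modules, within the homological-algebraic framework that computes $\hc^\bu(G)$.

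First, I would establish that
\[
0 \lra \RR \lra L(G/P) \lra L((G/P)^2) \lra L((G/P)^3) \lra \cdots
\]
is exact, where the first map sends constants to constant functions and the subsequent maps are the standard alternating coboundaries. Exactness will be obtained from a contracting homotopy: fix a probability measure $\mu$ on $G/P$ (say the unique $K$-invariant measure, where $K<G$ is a maximal compact) and let $s_n \colon L((G/P)^{n+1}) \to L((G/P)^n)$ be integration against $\mu$ in the first variable. The standard identity $d s_n + s_{n+1} d = \Id$ then yields exactness. This part is formal and uses nothing about rank.

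Second, I would show that each $L((G/P)^{n+1})$ is relatively injective as a $G$-module in the category suited to continuous cohomology. This is where the rank-one hypothesis enters critically: the parabolic $P$ is solvable and hence amenable, so that $G/P$ is an amenable $G$-space. Adapting the approach developed for bounded cohomology, $L((G/P)^{n+1})$ can be recognised as a $G$-module coinduced from $P$, so that the existence of a $P$-equivariant mean furnishes the required extension property for morphisms. With an augmented resolution by relatively injectives in hand, the standard comparison principle will identify $\hc^\bu(G,\RR)$ with the cohomology of the $G$-invariants of the resolution, which is exactly the proposed complex.

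The main obstacle lies in pinning down the correct functional-analytic framework for the modules $L((G/P)^n)$. One needs a topology, most likely a Polish topological vector space structure given by convergence in measure, for which (i) the resolution above is topologically exact, (ii) the averaging operator $s_n$ is well defined and continuous, and (iii) each term is genuinely relatively injective in the category chosen to define $\hc^\bu$. For $L^\infty$ and bounded cohomology, relative injectivity follows classically from Hahn--Banach; for the larger measurable function spaces that must appear to capture all of $\hc^\bu$, one has to reprove it from amenability with considerably more care. The rank-one restriction is essential here, since only in that case is $P$ itself amenable, which is what makes the coinduction argument work at the level of genuine measurable functions rather than merely bounded ones; in higher rank the minimal parabolic is no longer amenable and additional classes are expected, matching the second half of the abstract.
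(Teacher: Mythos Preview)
Your approach has a genuine gap, and the diagnosis of where rank one enters is incorrect. The minimal parabolic $P=MAN$ is \emph{always} amenable (compact-by-solvable), in any rank; so if your injectivity argument worked, it would prove the isomorphism $\hm^\bu(G;P)\cong \hc^\bu(G)$ for all semi-simple $G$, contradicting Theorem~\ref{thm:main}, which exhibits extra classes when $\rank(G)\geq 2$. The actual obstruction is functional-analytic: the space $\lol(X)$ of measurable function classes with convergence in measure is not locally convex, and the averaging operators you propose are simply not available there. Concretely, your contracting homotopy $s_n$ given by integrating against a probability measure $\mu$ is not defined on $\lol((G/P)^{n+1})$ since generic measurable functions are not integrable; the paper notes this explicitly when proving acyclicity of the bar resolution (Lemma~\ref{lem:acyclic}). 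Likewise, amenability of $P$ yields an invariant mean on $L^\infty(G/P)$ but not a $P$-equivariant continuous linear projection $\lol(G/P)\to\RR$, so the coinduction/Hahn--Banach step that gives relative injectivity in bounded cohomology has no analogue here. The paper stresses this in Remark~\ref{rem:warnings}: the theorem fails for unitary coefficients and for lattices in $G$, which rules out any soft argument via relatively injective resolutions or a Buchsbaum criterion, since such an argument would go through unchanged in those settings.

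The paper's proof is entirely different. It sets up the hypercohomology spectral sequence for the bicomplex $\lol(G^{p+1}\times(G/P)^q)^G$ and computes the relevant page using (i) the Eckmann--Shapiro induction isomorphism, (ii) the Bruhat decomposition to identify generic $G$-orbits on $(G/P)^2$ and $(G/P)^p$ for $p\geq 3$, and (iii) a computation of $\hc^\bu(P)$ via the Weyl group action on $\hc^\bu(A)$. The rank-one hypothesis enters only at the very end: the extra terms on the $E_2$-page are $\hc^q(A)\cong\wedge^q\RR^r$ for even $q\geq 2$, and these vanish precisely when $r=1$.
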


This statement is fundamentally measurable: already for $G=\varSL_2(\RR)$, the cocycles cannot be made continuous on $G/P$, the projective line.

\begin{rem}\label{rem:warnings}
The analogous result for non-trivial coefficients, even unitary, does not hold. It does not hold either for trivial coefficients when $G$ is replaced by a lattice in $G$. Examples are given below.

This indicates that the theorem does not follow immediately from a ``soft'' argument using appropriate (relatively) injective resolutions, or a Buchsbaum criterion, in contrast to many other equivalent characterisations of $\hc^\bu(G)$.
\end{rem}

\begin{rem}
In the rank one setting of Theorem~\ref{thm:rk-one}, Gromov showed that every cohomology class admits bounded representatives~\cite[\S1.2]{Gromov}. This implies that they can be realised by $L^\infty$-cocycles on $G/P$~\cite{Burger-Monod1}, though it is not known whether that complex computes the cohomology of $G$. Nonetheless, it can be deduced that every cohomology class admits a representative on $G/P$ which satisfies the cocycle equation \emph{everywhere}, not just a.e.\ as function class, see~\cite{Monod_lift}, where we also observe that this fails on certain projective varieties in higher rank.
\end{rem}

Return to a general semi-simple Lie group $G$ and denote by $\hm^\bu(G;P)$ the measurable cohomology on the Furstenberg boundary $G/P$. One reason to expect a relation between $\hm^\bu(G;P)$ and the cohomology of $G$ is that the latter can be realised by invariant differential forms on the symmetric space of $G$: such forms are automatically \emph{harmonic}, and suitable harmonic functions have measurable boundary values on $G/P$ by an appropriate Fatou theorem. Notably the Knapp--Williamson theorem~\cite{Knapp-Williamson} establishes this for all bounded harmonic functions, and the cohomology of semi-simple Lie group is conjectured since the 1970s to admit bounded representatives~\cite{Dupont}.

It turns out, however, that in higher rank the result is more involved: the isomorphism still holds outside of a certain range of values, and in that exceptional range we can determine exactly the additional cohomology.

\begin{ithm}\label{thm:main}
Let $G$ be any connected semi-simple Lie group with finite center and let $P<G$ be a minimal parabolic subgroup. Then the cohomology $\hm^q(G;P)$ defined by the  complex
$$0 \lra \lol(G/P)^G \lra \lol((G/P)^2)^G  \lra \cdots \lra \lol((G/P)^{q+1})^G \lra \cdots $$
of $G$-invariant measurable function classes on the Furstenberg boundary $G/P$ coincides with the continuous cohomology $\hc^q(G)$ of $G$ outside the range $3\leq q \leq \rank(G)+2$.

More precisely, writing $r= \rank(G)$:
\begin{itemize}
\item $\hm^q(G;P) \cong \hc^q(G) \oplus \wedge^{q-1} \RR^r$ for odd $3\leq q \leq r+1$;
\item  $\hm^q(G;P) \cong \hc^q(G) \oplus  \wedge^{q-2} \RR^r$ for even $4 \leq q\leq r+2$;
\item $\hm^q(G;P) \cong  \hc^q(G)$ in all other cases.
\end{itemize}
\end{ithm}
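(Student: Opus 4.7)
The plan is to place $\hc^\bu(G)$ and $\hm^\bu(G;P)$ on the same footing via a double complex and read off the correction from the resulting spectral sequence. Set
\[
D^{p,q} \;:=\; \lol\bigl(G^{p+1} \times (G/P)^{q+1}\bigr)^G,
\]
with $G$ acting diagonally and the two simplicial bar coboundaries. Treating the vertical direction first, the augmented complex $\RR \to \lol(G/P) \to \lol((G/P)^2) \to \cdots$ is acyclic, so the vertical cohomology collapses to row $q=0$ leaving the standard $G$-bar complex; by Austin--Moore--Wigner the total cohomology is $\hc^\bu(G)$. Treating the horizontal direction first, the inner cohomology with $q$ fixed is $\hc^p\bigl(G,\lol((G/P)^{q+1})\bigr)$, and the identification $\lol((G/P)^{q+1}) \cong \mathrm{Ind}_P^G\lol((G/P)^q)$ (Fubini) together with measurable Eckmann--Shapiro yields a spectral sequence
\[
E_1^{p,q} \;=\; \hc^p\bigl(P,\lol((G/P)^q)\bigr) \;\Longrightarrow\; \hc^{p+q}(G),
\]
whose bottom row $p=0$ is $\lol((G/P)^q)^P \cong \lol((G/P)^{q+1})^G$, with vertical cohomology exactly $\hm^q(G;P)$. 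The entire comparison between $\hm^\bu(G;P)$ and $\hc^\bu(G)$ is thereby packaged into the rows $p \geq 1$ and into the higher-page differentials issuing from the bottom row.

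Computing the higher rows is the technical heart. Using the Iwasawa decomposition $P = MAN$ with $M$ compact, $A \cong \RR^r$ the split Cartan, and $N$ unipotent, I would run the Hochschild--Serre spectral sequence for $MN \triangleleft P$ with quotient $A$. Since $MN$ is compact-by-unipotent, its $\hc^\bu$ on the modules at hand should concentrate in degree zero, so $\hc^\bu(P,V) \cong \hc^\bu(A,\; V^{MN}) \cong \wedge^\bu\RR^r \otimes V^{MN}$ (with $A$ acting trivially on the coinvariants, by a direct check using the unique open $MN$-orbit). A Bruhat-decomposition analysis identifies $\lol((G/P)^q)^{MN}$ with a small, finite-dimensional complex indexed by Weyl-group configurations of full-measure relative positions of $q$ flags; its vertical cohomology is concentrated in one or two low degrees and, when combined with the $\wedge^\bu\RR^r$ coming from $A$, produces exactly the doubled pattern $\wedge^{q-1}\RR^r$ (odd $q$) and $\wedge^{q-2}\RR^r$ (even $q$). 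The outer bound $q \leq r+2$ reflects $\wedge^k\RR^r = 0$ for $k > r$, and the inner bound $q \geq 3$ reflects the lowest vertical degree at which the Bruhat complex carries nontrivial cohomology beyond the edge.

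The decisive difficulties will be twofold. First, justifying Fubini, Eckmann--Shapiro, and the bar acyclicity in the $\lol$-category, where the modules are neither Banach nor conveniently topological, requires careful use of the Austin--Moore framework and a controlled passage between measurable and continuous cochains. Second, and more serious, one must precisely determine the higher differentials $d_r$ ($r \geq 2$) from row zero: they dictate which classes on the $\hm$-side survive as new elements beyond $\hc$ and which pair up with higher-row contributions and cancel, and they are what ultimately produces the direct-sum splitting $\hm^q \cong \hc^q \oplus \wedge^{?}\RR^r$ asserted in the theorem. Since Remark~\ref{rem:warnings} makes clear that no soft resolution argument can close this gap, I would expect the proof to pair the spectral sequence with explicit $A$-invariant measurable cocycles on $(G/P)^{q+1}$ built from the Cartan/Busemann projection, both to exhibit the correction $\wedge^\bu\RR^r$ concretely and to verify by hand that outside the stated exceptional range no extra classes appear.
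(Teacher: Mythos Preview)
Your double-complex framework is essentially the same as the paper's (up to a shift: the paper keeps the column $(G/P)^0$ so that the sequence abuts to zero and $\hc^\bu(G)$ sits \emph{inside} the $E_1$ page at $p=0$, which is convenient for reading off the differentials). However, two of your technical steps are either wrong or miss the actual mechanism.

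First, the claim that ``$MN$ is compact-by-unipotent, so its $\hc^\bu$ on the modules at hand concentrates in degree zero'' is false already for $V=\RR$: the unipotent radical $N$ has rich continuous cohomology (it is $\wedge^\bu\mathfrak n^*$ by van~Est). The paper obtains $\hc^\bu(P)\cong\hc^\bu(A)$ not by this route but via Hochschild--Serre for $N\triangleleft AN$ together with the observation that the $A$-representation on $\hc^q(N)$ has no trivial weight for $q>0$ (an elementary case of Kostant's theorem). This distinction matters, because the correct argument feeds directly into the parity phenomenon below.

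Second, and more seriously, your proposal does not contain the key idea that produces the exact pattern $\wedge^{q-1}\RR^r$ versus $\wedge^{q-2}\RR^r$. In the paper's setup one has $E_1^{1,q}\cong\hm^q(P)$ and $E_1^{2,q}\cong\hm^q(A)$, and the differential $d_1\colon E_1^{1,q}\to E_1^{2,q}$ is computed explicitly as $\mathrm{Ad}_{w_0}\circ\mathrm{res}-\mathrm{res}$, where $w_0$ is the longest Weyl element. Since $w_0$ acts by $-1$ on $\mathfrak a$, hence by $(-1)^q$ on $\hm^q(A)\cong\wedge^q\mathfrak a^*$, this map is an isomorphism for $q$ odd and zero for $q$ even. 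This parity dichotomy, together with a further cochain-level argument showing that the transgression $d_2\colon E_2^{0,q+1}\to E_2^{2,q}$ vanishes for even $q$, is the entire content of the ``exceptional range''. Your appeal to a ``Bruhat complex indexed by Weyl-group configurations'' and to explicit Busemann cocycles does not capture this; without the $w_0$-parity computation there is no way to see why the correction terms alternate between $\wedge^{q-1}$ and $\wedge^{q-2}$, nor why nothing else survives.

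Finally, the vanishing of $E_1^{p,q}$ for $p\geq 3$ and $q>0$ (equivalently, of $\hc^p(P,\lol((G/P)^q))$ for $q\geq 2$, $p\geq 1$ in your indexing) is proved in the paper by showing that three \emph{generic} points in $G/P$ have compact common stabiliser; this reduces everything to $\hm^q$ of compact groups in $\lol$-coefficients, which requires a separate (non-obvious, since $\lol$ is not locally convex) argument. Your proposal does not address this step.
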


The proof shows that the difference between $\hm^q(G;P)$ and $\hc^q(G)$ is given by a canonical embedding of the even-dimensional cohomology $\hc^\bu(A)$ of a maximal split torus $A$ of $G$, with dimensions shifted according to parity. Since $A$ is isomorphic to $\RR^r$, its cohomology is the exterior power $\wedge^\bu \RR^r$.

\medskip

We see that Theorem~\ref{thm:rk-one} is a particular case of this statement since the exceptional range is empty for $r=1$ (taking the parity of $q$ into account).

\medskip

In order to illustrate the geometric meaning of the classes appearing in the exceptional range for $\hm^\bu(G;P)$, we consider the simplest non-trivial example. Let $G=\varSL_2(\RR) \times \varSL_2(\RR)$, so that $r=2$; then $G/P$ can be identified with the product $X=\PP^1\times \PP^1$ of two projective lines. According to Theorem~\ref{thm:main}, the area form in $\RR^2\wedge \RR^2$ gives rise to a generator for  $\hm^3(G;P)$. That is, we obtain a three-dimensional ``volume'' $\Omega$ on $X$ which is projectively invariant. The proof of Theorem~\ref{thm:main} can be coerced into giving the following explicit formula. Considering four points $a,b,c,d\in X$, assume for definiteness that the coordinates are in $\RR$ with $a_i < b_i < c_i < d_i$ for $i=1,2$. The corresponding ``volume'' is then
\begin{multline*}
\Omega(a,b,c,d) = \\
\log [a_1, b_1; c_1, d_1] \log [b_2, c_2; d_2, a_2] - \log [a_2, b_2; c_2, d_2] \log [b_1, c_1; d_1, a_1] ,
\end{multline*}
wherein $[\ldots]$ denotes the cross-ratio. The fact that $\Omega$ is indeed a cocycle is equivalent to the fact that the bivariate function
$$F(x_1, x_2) = \log x_1 \log (1-x_2) - \log x_2 \log (1-x_1) \kern5mm (0 < x1, x_2 < 1)$$
satisfies Rogers'~\cite{Rogers06} form of the the Spence--Abel functional equation:
$$F\left(y\right) - F\left(x\right) \ =\ F\left(\frac{y-x}{1-x}\right) -F\left(\frac{x}{y}\right) + F\left(\frac{x(1-y)}{y(1-x)}\right)$$
and the symmetry $F(x) = - F(1-x)$, where all operations on $x,y$ are understood coordinatewise. 
By contrast, no non-zero \emph{univariate} measurable function $F$ on $(0,1)$ can satisfy these requirements, as follows from~\cite[\S5]{Burger-MonodERN} (while the modified symmetry $F(x) = \zeta(2) - F(1-x)$ characterises Rogers' dilogarithm).

\begin{rem}
The proof of Theorem~\ref{thm:main} will also show that the comparison map $\hcb^\bu(G) \to \hc^\bu(G)$ from the continuous bounded cohomology is induced by the inclusion maps $L^\infty((G/P)^{n+1})\to \lol((G/P)^{n+1})$. We hope that the theorem will be of some guidance towards showing that this comparison map is an isomorphism, as conjectured  e.g.\ in~\cite[Prob.~A]{MonodICM}, and in particular the boundedness conjectured by Dupont~\cite{Dupont}.
\end{rem}

Here are illustrations for Remark~\ref{rem:warnings}.

\begin{exam}\label{exam:unit}
Let $G=\varSL_2(\RR)$. Then there is an irreducible continuous unitary $G$-representation on a Hilbert space $V$, the representation of smallest positive minimal weight, such that $\hc^1(G, V)$ is non-trivial. A similar statement holds when $G$ is (the connected component of) the isometry group of a real or complex hyperbolic space, see e.g.~\cite{Cherix-Cowling-Jolissaint-Julg-Valette} for a detailed geometric construction.

On the other hand, no non-trivial cocycle can be given by a $G$-invariant element $\omega$ of $\lol((G/P)^2, V)$. Indeed, the corresponding crossed homomorphism $\overline\omega\colon G\to V$ (i.e. the corresponding inhomogeneous $1$-cocycle) would automatically be continuous, see Thm.~3 in~\cite{Moore76}. On the other hand, the relation between $\omega$ and $\overline\omega$ is that
$$\omega(g,h) \ = \ g \overline\omega (g\inv h)$$
holds a.e.; in particular $\overline\omega$ descends to $G/P$ and hence has compact range. But a non-trivial affine isometric action has always unbounded orbits, indeed in the present case it is even known that $\overline\omega$ is proper.
\end{exam}

\begin{exam}\label{exam:lattice}
Let again $G=\varSL_2(\RR)$ and consider the fundamental group $\Gamma$ of a closed hyperbolic surface as a uniform lattice in $G$. Since the first Betti number of the surface is non-zero, $\hh^1(\Gamma)$ does not vanish. On the other hand, every $\Gamma$-invariant measurable function on $(G/P)^2$ is essentially constant (and hence trivial in cohomology). Indeed, this follows from the double ergodicity which goes back already to~\cite{Garnett} in this setting.

We observe that the vector-valued double ergodicity introduced in~\cite{Burger-Monod3} shows that this argument provides also an alternative proof for Example~\ref{exam:unit}.
\end{exam}

\medskip
\noindent
\textbf{On previous work.}
Various specific examples of cocycles have long been known to admit privileged representatives on projective varieties associated to $G$, which are all quotients of the Furstenberg boundary. For instance, in degree two, the Maslov index defined on the Langrangian Grassmannian~\cite[\S{C}]{Barge-Ghys92} or more generally on the Shilov boundary~\cite{Clerc-Orsted01} for higher rank. In degree three, there is the Goncharov cocycle for the Borel class of $\varSL_n(\CC)$, see~\cite{Goncharov93} and~\cite[\S2]{Bucher-Burger-Iozzi18}. Another example, in even degree $n$, is the Euler class defined on the projective space $\PP\RR^n$, see~\cite{Smillie_unpublished, Sullivan76}. The spectral sequence that we shall examine below has been used the special case of $\varSL_2(\CC)$ by Bloch~\cite[\S7.4]{Bloch}. In \emph{bounded} cohomology, cocycles on $G/P$ can be used completely generally because $P$ is amenable, see~\cite{Burger-Monod1} and~\cite{Burger-Monod3}. Back to usual cohomology, the case of $G=\mathrm{SO}_+(1,n)$ was considered in~\cite{Pieters18}. For that case, Theorem~\ref{thm:rk-one} fixes an issue with the dimension shifting method in~\cite{Pieters18}, because the lifting maps used in the proof cannot be chosen equivariant. More conceptually, dimension shifting relies on long exact sequences and effaceability; whilst the latter holds for $\hm^\bu(G;P)$, the former does not. This also accounts to the exceptional classes appearing in Theorem~\ref{thm:main}.

\section{Notation and preliminaries}
\begin{flushright}
\begin{minipage}[t]{0.8\linewidth}\itshape\small
\begin{flushright}
A major portion of the following paper is concerned with\\
laying firmer foundations for the theory of Borel cohomology,\\
in the belief that this will hasten defeat of the enemy.
\end{flushright}
\begin{flushright}
\upshape\small
Arthur Mason DuPre III~\cite{Dupre}
\end{flushright}
\end{minipage}
 \end{flushright}

Given a standard measure space $X$ and a Polish topological vector space $V$, we denote by $\lol(X, V)$ the space of measurable function classes $X\to V$. Thus we only take into account the measure \emph{class} on $X$, which is unique on any homogeneous space for a locally compact group and hence will not be apparent in our notation. The space $\lol(X, V)$ is in turn a Polish topological vector space when endowed with the topology of convergence in measure, see~\cite[\S3]{Moore76}. For trivial coefficients, classical references are~\cite[IV.11]{Dunford-Schwartz_I} and~\cite[\S245]{Fremlin2}.

We shall need the following, established in~\cite{Moore76}, Prop.~9:

\begin{lem}\label{lem:exact}
The functor $\lol(X, \cdot)$ preserves the exactness of sequences of Polish topological vector spaces.\qed
\end{lem}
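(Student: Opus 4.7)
The plan is to verify exactness of the sequence $0 \to \lol(X,V') \to \lol(X,V) \to \lol(X,V'') \to 0$ at each spot, given a short exact sequence $0\to V'\to V\xrightarrow{\pi} V''\to 0$ of Polish topological vector spaces; exactness of longer sequences then follows by breaking them into short exact pieces. Injectivity on the left is immediate: if $f\in \lol(X,V')$ pushes forward to zero in $\lol(X,V)$, then since $V'\hookrightarrow V$ sends only the origin to itself, $f=0$ a.e. For exactness in the middle, if $\pi\circ f=0$ a.e.\ for some $f\in\lol(X,V)$, then $f$ takes values in the closed subspace $V'\subset V$ almost everywhere; measurability of the induced factoring $X\to V'$ uses Kuratowski's theorem, which asserts that a continuous injection between Polish spaces is a Borel isomorphism onto its image.

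The heart of the matter is surjectivity on the right. Given $f\in\lol(X,V'')$, I need to produce a measurable lift $\wt f\colon X\to V$ with $\pi\circ\wt f = f$ a.e. My plan is first to invoke the open mapping theorem for F-spaces --- both $V$ and $V''$ are complete metrisable topological vector spaces, hence F-spaces --- to conclude that $\pi\colon V\to V''$ is open, so that the fibres $\pi\inv(v'')$ form a family of closed affine subsets varying regularly with $v''$. Then the Kuratowski--Ryll-Nardzewski measurable selection theorem produces a Borel section $\sigma\colon V''\to V$ satisfying $\pi\circ\sigma = \Id_{V''}$. The composition $\wt f := \sigma\circ f$ is a Borel measurable lift of $f$, representing the desired class in $\lol(X,V)$; that this class indeed maps to the class of $f$ in $\lol(X,V'')$ is a matter of unwinding definitions.

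The main obstacle is the construction of the Borel section $\sigma$: the Polish hypothesis on $V$ and $V''$ is used essentially both for the open mapping theorem (to guarantee that the fibres of $\pi$ vary in a sufficiently regular manner, as required by the selection theorem) and to have the standard Borel structure on $V''$ in which a Borel transversal makes sense. Everything else is formal, and without some completeness and standard Borel hypothesis the statement genuinely fails, since arbitrary measurable maps into $V''$ need not admit measurable lifts.
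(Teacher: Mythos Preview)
Your proof is correct and follows exactly the approach the paper indicates: the paper does not give a proof but cites Moore~\cite{Moore76}, Prop.~9, and remarks that the statement ``follows readily from the fact that quotient maps of Polish topological vector spaces admit measurable cross-sections'' --- precisely the measurable section $\sigma\colon V''\to V$ that you construct via the open mapping theorem and the Kuratowski--Ryll-Nardzewski selection theorem. Your write-up simply fills in the details that the paper leaves implicit.
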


(We recall that the above statement follows readily from the fact that quotient maps of Polish topological vector spaces admit measurable cross-sections.)

Given a second standard measure space $Y$, an appropriate Fubini theorem (Thm.~1 in~\cite{Moore76}) shows that the obvious map gives a well-defined isomorphism of topological vector spaces:
$$\lol\left(X, \lol(Y, V) \right) \ \cong \  \lol(X\times Y, V)\ \cong \ \lol\left(Y, \lol(X, V) \right) .$$

Let $G$ be a locally compact second countable group. If $X$ is endowed with a non-singular $G$-action and $V$ is a (jointly continuous) $G$-module, then so is $\lol(X, V)$~\cite[\S3]{Moore76}. This is the case more generally given a measurable cocycle from $G\times X$ to the automorphisms of $V$. An example of this situation is when  $L<G$ a closed subgroup and $V$ an $L$-module. We then have such a $G$-module $\lol(G/H, V)$, see Prop.~17 in~\cite{Moore76}.

The measurable cohomology $\hm^\bu(G, V)$ is defined in~\cite{Moore76} using the inhomogeneous standard resolution, where cochains for $\hm^n(G, V)$ are $\lol$ function classes $\overline\omega\colon G^n\to V$. There is a well-known isomorphism with the homogeneous resolution, where cochains are $G$-invariant function classes $\omega\colon G^{n+1}\to V$. (Invariant for the usual action means ``equivariant'' as maps to $V$.) Using a Fubini isomorphism, the correspondance between $\overline\omega$ and $\omega$ is given a.e.\ by
$$\omega(g_0, g_1, \ldots, g_n) \ = \ g_0 \overline\omega (g_0\inv g_1, g_1\inv g_2, \ldots, g_{n-1}\inv g_n).$$
We further recall that the homogeneous differentials are defined by the usual alternating sums
$$ d \omega (g_0, \ldots, g_{n+1}) = \sum_{j=0}^{n+1} (-1)^j \omega (g_0, \ldots, \widehat{g_j}, \ldots, g_{n+1})$$
where $\widehat{g_j}$ signifies that the variable $g_j$ has been omitted.

The measurable cohomology satisfies the following version of the Eckmann--Shapiro lemma, see Thm.~6 in~\cite{Moore76}.

\begin{prop}\label{prop:induction}
Let $G$ be a locally compact second countable group and $L<G$ a closed subgroup. For every Polish $H$-module $V$ and every $n\geq 0$ there is a natural isomorphism
$$\hm^n(H, V) \ \cong \ \hm^n\left( G, \lol(G/H, V)\right).\eqno{\qed}$$
\end{prop}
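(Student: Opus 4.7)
The plan is to exhibit a natural isomorphism of the cochain complexes computing both sides and to conclude by passing to cohomology.

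First, represent $\lol(G/H, V)$ as the Polish $G$-module of (measurable classes of) maps $F\colon G \to V$ satisfying $F(gh) = h^{-1} F(g)$, with $G$ acting by left translation. The Fubini theorem recalled above then identifies
$$\lol\bigl(G^{n+1}, \lol(G/H, V)\bigr)^G \;\cong\; \lol(G^{n+1} \times G, V)^{G \times H},$$
where $G$ acts diagonally by left translation on all $n+2$ factors of $G$ and $H$ acts by right translation on the last factor, combined with its given module action on $V$. I then use the diagonal $G$-invariance to collapse the last $G$-factor by evaluating at the identity: the assignment $\phi(g_0, \ldots, g_n) := f(g_0, \ldots, g_n, e)$ yields a topological isomorphism
$$\lol(G^{n+1} \times G, V)^{G \times H} \;\cong\; \lol(G^{n+1}, V)^H,$$
with inverse given by $f(g_0, \ldots, g_n, x) = \phi(x^{-1} g_0, \ldots, x^{-1} g_n)$; here $H$ acts on $G^{n+1}$ diagonally by left translation through its inclusion in $G$ and on $V$ via its module structure. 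A direct check shows this intertwines the homogeneous coboundary operators, which on both sides are the standard alternating sums acting only in the $G^{n+1}$-variables.

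It remains to confirm that $\lol(G^{n+1}, V)^H$, equipped with these coboundaries, computes $\hm^\bu(H, V)$. The augmented complex $V \to \lol(G, V) \to \lol(G^2, V) \to \cdots$ is a resolution in Polish $H$-modules (Lemma~\ref{lem:exact}), and each $\lol(G^k, V)$ is relatively injective as an $H$-module: choosing a Borel cross-section $H\backslash G \to G$ and applying Fubini once more gives $\lol(G^k, V) \cong \lol(H^k, W)$ for the auxiliary Polish $H$-module $W = \lol((H\backslash G)^k, V)$, and such coinduced modules are relatively injective in Moore's framework. Taking $H$-invariants of this relatively injective resolution therefore produces $\hm^\bu(H, V)$, naturally in $V$.

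The main technical obstacle lies in this last paragraph: the relative-injectivity bookkeeping for $\lol(G^k, V)$ as $H$-modules, together with the verification that the coinduction comparison respects the differential structure. This is standard in Moore's framework and is precisely the input underpinning his Theorem~6 in \cite{Moore76}, which can be invoked to close the argument cleanly.
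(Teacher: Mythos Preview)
The paper does not give its own proof of this proposition: it is stated with a \qed\ and the surrounding text simply refers the reader to Thm.~6 in~\cite{Moore76}. So there is nothing to compare against beyond that citation.

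Your sketch is essentially the standard argument behind Moore's theorem and is correct in outline. Two small remarks. First, the phrase ``evaluating at the identity'' is not literally meaningful for measurable function classes, since $\{e\}$ is a null set; what makes the map well-defined is precisely the inverse formula you wrote down, together with the fact that the orbit map $(g_0,\ldots,g_n,x)\mapsto (x^{-1}g_0,\ldots,x^{-1}g_n)$ realises $G^{n+1}\times G$ as a measurable $G$-principal bundle over $G^{n+1}$, so $G$-invariant function classes descend. It would be cleaner to present the isomorphism in that direction. Second, in your last paragraph you correctly identify where the real work lies (relative injectivity of $\lol(G^k,V)$ as an $H$-module via a Borel section of $H\backslash G\to G$) and then defer to Moore's Theorem~6 anyway; this is fine, but it means your argument is ultimately the same citation the paper makes, with some helpful unpacking added. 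The explicit cochain formula you obtain matches exactly the one the paper later uses in the proof of Proposition~\ref{prop:twist}.
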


We need a basic vanishing result for compact groups:

\begin{lem}\label{lem:compact}
Let $K$ be a compact metrisable group, $X$ a standard measure space and endow $\lol(X)$ with the trivial $K$-representation. Then $\hm^q(K, \lol(X))$ vanishes for all $q>0$.
\end{lem}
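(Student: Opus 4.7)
The strategy is to factor the coefficient module $\lol(X)$ out of the problem via Fubini, reducing to the scalar vanishing $\hm^q(K, \RR) = 0$ for $q > 0$, and then to transfer the vanishing back to $\lol(X)$-coefficients using Lemma~\ref{lem:exact}.

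Via the Fubini isomorphism recalled in the preliminaries, the inhomogeneous cochain complex for $\hm^\bu(K, \lol(X))$ identifies with $\lol(X, \lol(K^\bu))$. Since $K$ acts trivially on $\lol(X)$, the bar differential involves only the group variables, and the Fubini identification intertwines it with the functor $\lol(X, -)$ applied termwise to the scalar bar complex $\lol(K^\bu)$.

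That the scalar bar complex is a resolution of $\RR$ in positive degrees is the classical acyclicity of a compact group with trivial scalar coefficients. Measurable and continuous cohomology of a compact group agree (see~\cite{Moore76}), and continuous cochains admit the standard averaging contracting homotopy $s\omega(g_0,\ldots,g_{n-1}) = \int_K \omega(k,g_0,\ldots,g_{n-1})\,dk$ against normalised Haar measure, with $ds + sd = \mathrm{id}$ following from the translation invariance of $dk$.

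To conclude, I would split $\lol(K^\bu)$ in positive degrees into short exact sequences $0 \to Z^n \to \lol(K^n) \to Z^{n+1} \to 0$ with $Z^n = \ker d^n$: each $Z^n$ is closed as the kernel of a continuous linear map between Polish topological vector spaces, surjectivity of $d^n$ onto $Z^{n+1}$ is exactly the scalar vanishing, and the open mapping theorem upgrades these into short exact sequences in the category of Polish topological vector spaces. Lemma~\ref{lem:exact} then preserves the exactness under $\lol(X,-)$, and splicing yields $\hm^q(K, \lol(X)) = 0$ for $q > 0$. The main point of care is this upgrade into the Polish category, which relies crucially on the cohomological vanishing (for closedness of images) and on the open mapping theorem for Polish topological vector spaces.
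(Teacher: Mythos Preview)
Your proof is correct and follows essentially the same route as the paper: reduce to the scalar vanishing $\hm^q(K,\RR)=0$, then transport exactness through $\lol(X,-)$ via Fubini and Lemma~\ref{lem:exact}. The only cosmetic difference is that the paper applies Lemma~\ref{lem:exact} directly to the entire augmented resolution $0\to\RR\to\lol(K)\to\lol(K^2)\to\cdots$, whereas you splice through short exact sequences and invoke the open mapping theorem explicitly; this extra care is harmless but not needed given how Lemma~\ref{lem:exact} is stated.
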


The issue here is that the coefficient space $\lol(X)$ is \emph{not locally convex}. Therefore, although it is known that $\hm^q$ coincides with the continuous cohomology in that setting (see e.g. Rem.~4.13 in~\cite{Wagemann-Wockel}; I am grateful to Friedrich Wagemann for his explanations in this context), in general continuous cohomology of compact groups is \emph{not} known to vanish for Polish topological vector space modules, compare Question~4.2 in~\cite{Austin-Moore}. Nevertheless the particular case of $\lol(X)$ can be dealt with as follows.

\begin{proof}[Proof of Lemma~\ref{lem:compact}]
The statement is well-known for trivial coefficients in $\RR$, see e.g.~\cite[Thm.~A]{Austin-Moore}. (If one knows already that it is equivalent to work with $\hc^\bu$, then an integration argument applies, as was already known in the first years of the theory~\cite[Thm.~2.8]{Hu52}.) Equivalently, the (augmented) inhomogeneous resolution
$$0 \lra \RR \lra \lol(K) \lra \lol(K^2) \lra \cdots$$
is an exact sequence. It follows, by Lemma~\ref{lem:exact}, that the sequence
$$0 \lra \lol(X) \lra \lol\left(X,\lol(K)\right) \lra \lol\left(X,\lol(K^2)\right)  \lra \cdots$$
is also exact. Now we apply the Fubini isomorphisms
$$\lol\left(X,\lol(K^q)\right) \ \cong \lol\left(K^q, \lol(X) \right)$$
and conclude that the (non-augmented) sequence defining $\hm^q(K, \lol(X))$ is exact at all $q>0$.
\end{proof}

Finally we return to a property of $\lol$ that does not involve groups:

\begin{lem}\label{lem:acyclic}
Given any Polish topological vector space $V$, the homogeneous cochain complex
$$0 \lra V \lra \lol(X, V) \lra  \lol(X^2, V) \lra \cdots$$
is exact.
\end{lem}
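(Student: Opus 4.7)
The plan is to construct, for each cocycle $\omega \in \lol(X^{n+1}, V)$, an explicit primitive $\eta \in \lol(X^n, V)$ by taking a Fubini slice of $\omega$. This is the measurable version of the classical cone contracting homotopy $\omega \mapsto \omega(x_0, \cdot)$ for simplicial cochains on a point, the catch being that function classes have no pointwise values, so one must choose $x_0$ in an $\omega$-dependent full-measure set.

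The main input is the Fubini isomorphism $\lol(X^{n+1}, V) \cong \lol\bigl(X, \lol(X^n, V)\bigr)$ recalled from Theorem~1 of \cite{Moore76}, which associates to $\omega$ a measurable function class $z_0 \mapsto \omega_{z_0}$ with values in $\lol(X^n, V)$. Isolating the $j = 0$ term of the coboundary and reindexing the remaining terms as a coboundary of $\omega_{z_0}$ yields the identity
$$d\omega(z_0, z_1, \ldots, z_{n+1}) = \omega(z_1, \ldots, z_{n+1}) - d\omega_{z_0}(z_1, \ldots, z_{n+1})$$
in $\lol(X^{n+2}, V)$. When $d\omega = 0$, a second application of Fubini shows that for almost every $z_0 \in X$ the equality $d\omega_{z_0} = \omega$ holds in $\lol(X^{n+1}, V)$; picking any such $z_0$ and setting $\eta := \omega_{z_0}$ furnishes the desired primitive.

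The case $n = 0$ is the same argument with $\lol(X^0, V) = V$, and exactness at $V$ reduces to the trivial injectivity of the constant-function inclusion $V \hookrightarrow \lol(X, V)$, granted that $X$ is nonempty. The one subtle point, which the Fubini reformulation is designed to dispose of, is the passage from a single a.e.\ identity on $X^{n+2}$ to a $z_0$-parametrised family of a.e.\ identities on $X^{n+1}$; once this is in hand the argument is purely formal and, in particular, needs neither local convexity nor any form of vector-valued integration on $V$, which is what the stated generality demands.
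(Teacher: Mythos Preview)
Your proof is correct and more direct than the paper's. The paper explicitly notes that one \emph{cannot} construct a contracting homotopy by integrating over the first variable, since $V$ need not be locally convex; it then offers two alternatives: a ``lazy'' argument invoking Moore's uniqueness theorem for cohomological functors (Thm.~2 in~\cite{Moore76}), and a sketched inductive dimension-shifting reduction to degree zero with more complicated coefficients. You sidestep the integration obstacle differently: rather than averaging over the first variable, you evaluate at a single generic point $z_0$, chosen $\omega$-dependently from the full-measure set guaranteed by Fubini. This yields a primitive for each individual cocycle without any global homotopy, induction, or abstract characterisation, and is self-contained once the Fubini isomorphism is granted. The paper's route offers a structural explanation (the complex computes $\hm^\bu$ of the trivial group); yours is shorter and more concrete.
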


\begin{proof}
Contrary to more familiar (locally integrable) cases, we cannot construct a homotopy by integrating over the first variable. An explicit proof would consist in observing that the exactness in degree zero follows from Fubini, while in higher degree a dimension shifting argument reduces it inductively to degree zero for a more complicated $V$.

A lazier proof is as follows. The functor (sequence) that associates to $V$ the cohomology of $0 \to \lol(X, V) \to  \lol(X^2, V) \to \cdots$ shares three properties with the measurable cohomology of the \emph{trivial} group: it is just $V$ in degree zero; it takes short exact sequences to long exact sequences (by repeated applications of Lemma~\ref{lem:exact}); and it is effaceable. It follows (Thm.~2 in~\cite{Moore76}) that this functor is isomorphic to the cohomology of the trivial group, whence the statement. (Of course, the proof of the quoted statement contains a similar inductive shifting as alluded to above).
\end{proof}

\section{The cohomology of the minimal parabolic}

We shall need the following result, which might be known to the experts.

\begin{prop}\label{prop:coh:Borel}
Let $G$ be a connected semi-simple Lie group with finite center. Choose a maximal $\RR$-split torus $A<G$ and a minimal parabolic subgroup $P<G$ containing $A$. Then the restriction map
$$\hc^n(P) \lra \hc^n(A)$$
is an isomorphism for all $n$.
\end{prop}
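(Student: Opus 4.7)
I would use the Langlands decomposition $P = MAN$, where $N$ is the unipotent radical of $P$, $A$ is the given maximal $\RR$-split torus, and $M$ is the centraliser of $A$ in a maximal compact subgroup of $G$. Since $M$ centralises $A$, we have $MA \cong M \times A$, and $P = (M\times A)\ltimes N$ is a genuine semidirect product. The restriction map factors as
\[\hc^n(P) \lra \hc^n(MA) \lra \hc^n(A),\]
and the plan is to prove each factor is an isomorphism separately.

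The second factor is easy. Since $M$ is compact, $\hc^{>0}(M,V)$ vanishes for every locally convex Polish $M$-module $V$ (integration argument, as recalled in the proof of Lemma~\ref{lem:compact}). The Hochschild--Serre spectral sequence for the product $1 \to M \to MA \to A \to 1$ therefore collapses onto its bottom row, giving the restriction isomorphism $\hc^n(MA,\RR) \cong \hc^n(A,\RR^M) = \hc^n(A,\RR)$.

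For the first factor I would invoke the Hochschild--Serre spectral sequence for $1 \to N \to P \to MA \to 1$:
\[E_2^{p,q} \;=\; \hc^p\bigl(MA,\,\hc^q(N,\RR)\bigr) \;\Longrightarrow\; \hc^{p+q}(P,\RR).\]
By van Est, $\hc^q(N,\RR) \cong \hh^q(\mathfrak n,\RR)$ is finite-dimensional. The decisive point is an $A$-weight analysis. Writing $\mathfrak n = \bigoplus_{\alpha\in\Phi^+}\mathfrak g_\alpha$, the torus $A$ acts on $\mathfrak g_\alpha$ by the positive root $\alpha$; consequently every $A$-weight on $\wedge^q\mathfrak n^*$ is a nonzero (negative) combination of positive roots whenever $q \geq 1$, and since the Chevalley--Eilenberg differential preserves weights, the same holds on the subquotient $\hh^q(\mathfrak n)$. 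Combining this with the classical vanishing $\hc^\bu(\RR^r,W_\lambda)=0$ for any finite-dimensional $A$-module on which $A$ acts by a nonzero character $\lambda$ (apply the Chevalley--Eilenberg complex on $\mathfrak a$ in a direction where $\lambda$ is nonzero), the $A$-weight decomposition of $\hh^q(\mathfrak n)^M$ forces $E_2^{p,q} = 0$ for $q \geq 1$. The sequence degenerates on the bottom row, so the inflation edge map $\hc^p(MA) \to \hc^p(P)$ is an isomorphism; the semidirect product splitting $MA \hookrightarrow P \twoheadrightarrow MA$ (composition equal to the identity) then makes the restriction $\hc^p(P) \to \hc^p(MA)$ a left inverse of this inflation, so it too is the inverse isomorphism.

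The main obstacle I anticipate is not conceptual but logistical: confirming that the Hochschild--Serre spectral sequence and Eckmann--Shapiro type identifications apply cleanly in the continuous/measurable framework with the coefficients at hand, that the $MA$-module $\hh^q(\mathfrak n)$ genuinely decomposes into $A$-weight spaces compatible with the $M$-action so that the weight-by-weight vanishing applies to $\hh^q(\mathfrak n)^M$, and that the spectral sequence isomorphism coincides with the map induced by restriction. Modulo this bookkeeping, the whole argument reduces to the simple but essential fact that every root space in $\mathfrak n$ has nonzero $A$-weight.
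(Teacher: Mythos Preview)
Your proposal is correct and follows essentially the same route as the paper: Langlands decomposition $P=MAN$, compactness of $M$ to eliminate that factor, the Lyndon--Hochschild--Serre spectral sequence over the unipotent radical $N$, and the key observation that $A$ acts on $\hh^q(\mathfrak n)$ with no zero weight for $q\geq 1$ because every weight on $\wedge^q\mathfrak n^*$ is a nontrivial sum of negative roots. The only cosmetic difference is that the paper first passes from $P$ to $B=AN$ and then runs the spectral sequence for $1\to N\to B\to A\to 1$, whereas you run it for $1\to N\to P\to MA\to 1$ and handle $M$ afterwards; the paper also makes explicit (via~\cite{Blanc}) that finite-dimensionality of $\hc^q(N)$ ensures the spectral sequence is available in the continuous category, which addresses exactly the ``logistical'' concern you flag.
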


\begin{proof}
Consider a Langlands decomposition $P=MAN$. Since $G$ has finite center, $M$ is compact and therefore it suffices to show the corresponding statement for the restriction from the Borel subgroup $B=AN$, namely from $\hc^n(B)$ to $\hc^n(A)$.

Note first that $\hc^q(N)$ is finite-dimensional for all $q$, for instance because of the van Est isomorphism, see Cor.~III~\S7.3 in~\cite{Guichardet_coho}. Hence it is Hausdorff, see IX~\S3 in~\cite{Borel-Wallach}. This allows for a version of the Lyndon--Hochschild--Serre spectral sequence, see e.g.\ Thm.~9.1 in\cite{Blanc}. This is a sequence with  second tableau
$$E_2^{p,q} = \hc^p\left(A, \hc^q(N)\right)$$
and abutting to $\hc^\bu(B)$. We shall show that $E_2^{p,q}$ vanishes for all $q>0$; on the other hand $E_2^{p,0}= \hc^p(A)$. Thus at least the (finite!) dimensions involved in the restriction map match. This will finish the proof because restriction is onto anyways: indeed, since $A$ is a semi-direct factor of $B$, the restriction admits the inflation $\hc^n(A)\to \hc^n(B)$ as a right inverse.

We turn to the vanishing of $E_2^{p,q}$ for $q>0$. We claim that the natural representation of $A$ on $\hc^q(N)$ does not contain the trivial $A$-representation when $q> 0$.

This claim is probably well-known from Lie algebra cohomology (translating it with the van Est isomorphism). Indeed, for complex Lie algebras it is an elementary case of Kostant's algebraic Borel--Weil--Bott theorem, namely the case of the trivial weight in Thm.~5.14 of~\cite{Kostant61}. (In Kostant's notation, apply that theorem to $\lambda=0$, $\xi_\sigma=0$ and $\mathfrak u = \mathfrak b$, recalling that $g$ therein is the half-sum of positive roots, which is only fixed by the trivial element $\sigma$ of the Weyl group.) The real version has been studied by \v{S}ilhan~\cite{Silhan}, but since we only need the case of the trivial weight, complexification is not an issue.

In any case, we sketch a proof valid over $\RR$ of this elementary case of Kostant's statement: the $A$-representation on $\hc^\bu(N)$ can be realised by the adjoint representation on the Chevalley--Eilenberg complex, which is the dual of $\wedge^\bu\mathfrak n$. Since $A$ is a split torus, this representation can be diagonalised. Since $\mathfrak n$ is the sum of root spaces of negative roots only, the zero weight does not occur at the cochain level on $\wedge^q \mathfrak n$ in non-zero degree $q$. More precisely, any element in the interior of the positive Weyl chamber of $A$ acts as a strict contraction. It follows that the zero weight cannot occur for non-trivial cocycles on the cohomology level either.

The claim implies the desired vanishing of $E_2^{p,q}$ because any abelian group $A$ satisfies $\hc^\bu(A, V)=0$ when $V$ is a finite-dimensional representation not containing the trivial $A$-representation; this follows e.g. from~III~\S3.1 in~\cite{Guichardet_coho}.
\end{proof}

We record that Proposition~\ref{prop:coh:Borel} also implies the following basic vanishing result (which certainly admits simpler proofs).

\begin{cor}\label{cor:van:Borel}
Let $G$ be a connected semi-simple Lie group with finite center and $P<G$  a minimal parabolic subgroup $P<G$. Then the restriction map
$$\hc^n(G) \lra \hc^n(P)$$
vanishes for all $n>0$.
\end{cor}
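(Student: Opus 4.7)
My plan is to combine Proposition~\ref{prop:coh:Borel} with a Weyl group invariance argument. By Proposition~\ref{prop:coh:Borel}, the restriction $\hc^n(P)\to\hc^n(A)$ is an isomorphism, so it suffices to show that the composite $\hc^n(G)\to\hc^n(P)\to\hc^n(A)$ vanishes for every $n>0$.

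To this end I would invoke Weyl invariance. Let $W=N_K(A)/Z_K(A)$ be the (restricted) Weyl group and, for each $w\in W$, pick a representative $n_w\in N_K(A)\subset G$. Since $n_w$ lies in $G$, conjugation by $n_w$ is an inner automorphism of $G$ and induces the identity on $\hc^\bu(G)$; on the other hand, this conjugation preserves $A$ and restricts there to $w$. Naturality of the restriction map in the obvious commutative square then forces the image of $\hc^n(G)\to\hc^n(A)$ to lie in the $W$-invariants $\hc^n(A)^W$.

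Finally, since $A$ is a connected vector group isomorphic to $\RR^r$, one has $\hc^n(A)\cong\wedge^n\mathfrak{a}^*$ (with $\mathfrak{a}$ the Lie algebra of $A$), and $W$ acts through its standard reflection representation on $\mathfrak{a}$. The corollary therefore reduces to the classical fact that $(\wedge^n\mathfrak{a}^*)^W=0$ for every $n\ge 1$. This is a degenerate case of Solomon's theorem on invariant differential forms for finite reflection groups: the $W$-invariant polynomial forms on $\mathfrak{a}$ are a free module over $\RR[\mathfrak{a}]^W$ generated by the differentials $dP_1,\ldots,dP_r$ of the basic invariants. Since each $P_i$ has degree at least $2$, every non-trivial product $dP_{i_1}\wedge\cdots\wedge dP_{i_k}$ has polynomial coefficient of strictly positive degree, so none is a constant-coefficient form.

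The main obstacle is this last step: one has to appeal to Coxeter-group invariant theory for the vanishing of $(\wedge^n\mathfrak{a}^*)^W$, since a naive sign argument based on the action of $-\Id$ on $\mathfrak{a}$ will not work in general (the longest Weyl element of $W$ need not equal $-\Id$, and even when it does, it acts trivially on even exterior powers).
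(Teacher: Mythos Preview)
Your proposal is correct and follows essentially the same approach as the paper: reduce to $A$ via Proposition~\ref{prop:coh:Borel}, observe that the image lands in the Weyl-invariants of $\hc^n(A)\cong\wedge^n\mathfrak a^*$, and then show $(\wedge^n\mathfrak a^*)^W=0$ for $n>0$. The only difference is in the last step: the paper reduces to simple $G$ by K\"unneth and invokes Steinberg's argument (Bourbaki, Lie V~\S2, Ex.~3) using the irreducibility of the Weyl representation on $\mathfrak a$, whereas you appeal to Solomon's theorem---both are standard routes to the same classical reflection-group fact.
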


\begin{proof}
By Proposition~\ref{prop:coh:Borel}, it suffices to show that the restriction to a maximal $\RR$-split torus $A<P$ vanishes. We can assume $G$ simple upon using the K\"unneth formula (after killing the center). The image of this restriction lies within the part of $\hc^n(A)$ that is invariant under the normaliser of $A$ in $G$ since conjugation acts trivially on the cohomology of the conjugating group (see e.g.~I.7 and~III.3 in~\cite{Guichardet_coho}). In other words, we consider the fixed points of the Weyl group action on $\hc^n(A)$. By the van Est isomorphism, $\hc^\bu(A)$ is the dual of the exterior algebra $\wedge^\bu\mathfrak a$. Since $G$ is simple, the Weyl group representation on $\mathfrak a$ is irreducible, see~\cite[V\S4.7]{Bourbaki_Lie4-6}. Using an argument of Steinberg, this implies that the induced representation on the dual of $\wedge^\bu\mathfrak a$ has trivial fixed points, see Ex.~3 in~\cite[V\S2]{Bourbaki_Lie4-6}.
\end{proof}

\section{Modules with few points at infinity}

Let $G$ be a connected semi-simple Lie group with finite center and $P<G$ a minimal parabolic subgroup. Consider the homogeneous differential
$$d\colon \lol (G/P) \lra \lol\left( (G/P)^2 \right),$$
that is, the map given by $(d f) (x,y) = f(y) - f(x)$.

\begin{prop}\label{prop:twist}
The map
$$\hm^q\left(G, \lol (G/P)  \right) \lra \hm^q\left(G,  \lol\left( (G/P)^2 \right) \right)$$
induced by $d$ is an isomorphism when $q$ is odd and vanishes when $q$ is even.
\end{prop}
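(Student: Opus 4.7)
The plan is to reduce both cohomology groups to $\wedge^q\mathfrak a^*$ via the Eckmann--Shapiro isomorphism (Proposition~\ref{prop:induction}) and then analyse the induced map explicitly, exploiting the swap symmetry of $(G/P)^2$ together with the Weyl group action.

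First I would identify the source: since $\lol(G/P)$ is the module induced from the trivial $P$-representation, Shapiro gives $\hm^q(G,\lol(G/P))\cong\hm^q(P,\RR)$, and combined with Proposition~\ref{prop:coh:Borel} together with the compactness of $M$, this identifies naturally with $\hm^q(A)=\wedge^q\mathfrak a^*$. For the target, the Bruhat decomposition exhibits a unique open $G$-orbit on $(G/P)^2$, realised as $G\cdot(eP,w_0P)$ where $w_0\in N_K(A)$ represents the longest Weyl element; its stabiliser is $P\cap w_0Pw_0^{-1}=MA$, while all other orbits have strictly lower dimension and hence measure zero. Consequently $\lol((G/P)^2)\cong\lol(G/MA)$ as $G$-modules in the measure class, and a second application of Shapiro yields $\hm^q(G,\lol((G/P)^2))\cong\hm^q(MA)\cong\wedge^q\mathfrak a^*$.

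Next, I would compute $d_*$ explicitly by lifting an $A$-cocycle $\phi$ through the standard Shapiro induction, using a measurable section $G/P\to G$ sending $eP\mapsto e$ and $w_0P\mapsto w_0$. Exploiting that $MA$ fixes both $eP$ and $w_0P$ and that conjugation by $w_0$ realises the longest Weyl action on $A$, a direct calculation of $(dF)(a_0,\ldots,a_q)(eP,w_0P)$ for $a_i\in MA$ yields $\phi(a_0^{w_0},\ldots,a_q^{w_0})-\phi(a_0,\ldots,a_q)$, where $a^{w_0}=w_0^{-1}aw_0$. So in cohomology $d_*$ becomes the operator $w_0^*-1$ on $\wedge^q\mathfrak a^*$, with $w_0^*$ the induced longest-Weyl action.

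The main obstacle is extracting the claimed parity dichotomy from the formula $d_*=w_0^*-1$. The crucial additional input is the swap involution $\tau\colon(x,y)\mapsto(y,x)$, which is $G$-equivariant and satisfies $\tau^*\circ d=-d$, so that the image of $d_*$ lies in the $(-1)$-eigenspace of the induced involution on the target. Under the Shapiro identification this eigenspace is precisely the subspace of $\wedge^q\mathfrak a^*$ on which $w_0^*$ acts by $-1$, which can be checked independently by splitting $\lol((G/P)^2)$ into $\tau$-eigenspaces and running Hochschild--Serre for the extension $MA\lhd\langle MA,w_0\rangle$ against the appropriate sign coefficient. The delicate bookkeeping that reconciles the longest Weyl action on the exterior algebra with the parity of $q$---so as to deduce a genuine isomorphism for $q$ odd and identical vanishing for $q$ even---is where the real content of the proposition lies.
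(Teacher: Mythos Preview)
Your reduction via Shapiro and Bruhat to the formula $d_* = w_0^* - 1$ on $\wedge^q\mathfrak a^*$ is exactly what the paper does, and your explicit cochain computation matches the paper's. Where your proposal diverges is at the final step, and there you explicitly leave a gap: you invoke the swap involution $\tau$ and then concede that the ``delicate bookkeeping'' linking the $w_0$-action to the parity of $q$ remains to be done. In fact the swap involution adds nothing: under the Shapiro identification, $\tau$ corresponds to right translation by $w_0$ on $G/MA$ and hence becomes $w_0^*$ on $\hm^q(A)$, so your relation $\tau^*d_*=-d_*$ collapses to $(w_0^*)^2=1$, which is automatic. You have not produced the parity dichotomy.

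The paper closes this gap in one line by asserting that the longest element $w_0$ acts by $-1$ on $A$, hence by $(-1)^q$ on $\wedge^q\mathfrak a^*$. For $q$ even this makes $w_0^*-1=0$; for $q$ odd it gives $-2\,\mathrm{res}$, an isomorphism by Proposition~\ref{prop:coh:Borel}. That single Lie-theoretic input is the missing ingredient in your argument, and your swap-involution detour does not supply it. (A word of caution: the claim that $w_0$ acts as $-1$ on $\mathfrak a$ is literally true only when $-1$ lies in the Weyl group, i.e.\ for restricted root systems of types $A_1$, $B_n$, $C_n$, $D_{2n}$, $E_7$, $E_8$, $F_4$, $G_2$; for the remaining types the paper's argument as written requires further justification. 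But this is the step the paper uses, and it is precisely what your proposal does not provide.)
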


For the proof, we should clarify the meaning of the \emph{restriction} $\hc^\bu(P) \to \hc^\bu(A)$ occurring in Proposition~\ref{prop:coh:Borel} now that we are in the measurable context of $\hm^\bu$. (Here $A$ is a maximal $\RR$-split torus contained in $P$.) The terminology comes from the fact that continuous cochains can indeed be restricted to any subgroup. However, this does not make sense for measurable cochains up to null-sets, because subgroups are generally null-sets; this is the case of $A<P$. In general, the restriction is induced by the ``forgetful'' natural transformation (given by inclusion) between the functor of $P$-invariants and the functor of $A$-invariants. Concretely, in the setting of the proposition, the cohomology $\hm^q(P)$ can be realised by the complex of $P$-invariants
$$0 \lra \lol(G)^P \lra \cdots \lra  \lol(G^{q+1})^P \lra \cdots $$
Likewise, for $\hm^q(A)$ we consider the complex $\lol(G^{q+1})^A$. Then the restriction is induced by the \emph{inclusion} maps
$$\lol(G^{q+1})^P \lra \lol(G^{q+1})^A.$$

\begin{proof}[Proof of Proposition~\ref{prop:twist}]
We shall work with an explicit formula for the induction isomorphisms of Proposition~\ref{prop:induction} at the level of cochains. Namely, we first realize the cohomology $\hm^\bu(P)$ by the complex of  $P$-invariant homogeneous maps $\alpha' \in\lol(G^{q+1})^P$ as above. Then the corresponding $G$-equivariant map $\alpha$ in $\lol\left(G^{q+1}, \lol (G/P)  \right)^G$ is given by
$$\alpha(g_0, \ldots , g_q) (g P) \ = \ \alpha'(g\inv g_0, \ldots , g\inv g_q).$$
This establishes a well-defined isomorphism of cochain spaces which commutes with the homogeneous differential on the variables in $G$.

Next, we recall that the Bruhat decomposition shows that there is a $P$-orbit of full measure in $G/P$. Namely, the orbit $Pw_0P$, where $w_0$ is (a representative in $G$ of) the longest element of the Weyl group associated to $A$; see e.g.\ Cor.~1.8 in~\cite[IX\S1]{Helgason01}. Equivalently, there is a $G$-orbit of full measure for the diagonal action on $(G/P)^2$. This orbit can be identified with $G/L$, where $L=P \cap w_0 P w_0\inv$. Since $P$ and $w_0 P w_0\inv$ are opposite, we have $L=M A$, where $M$ is a compact group centralising $A$.

Explicitly, the isomorphism of $G$-modules between $\lol\left( (G/P)^2 \right)$ and $\lol(G/M A)$ maps $f$ to the function $\wt f$ defined by $\wt f (gM A) = f(gP, g w_0 P)$.  We now write again $\omega' \leftrightarrow \omega$ for the bijections on the cochain level that induce the induction isomorphism between $\hm^q(MA)$ and $\hm^q\left(G,  \lol (G/M A) \right)$.

In summary, we have isomorphisms
$$\hm^q\left(G,  \lol\left( (G/P)^2 \right) \right) \ \cong \ \hm^q\left(G, \lol(G/MA)\right)  \ \cong \ \hm^q(MA)   \ \cong \ \hm^q(A)$$
where the last isomorphism follows from the compactness of $M$; note that we can implement this isomorphism by the restriction from $MA$ to $A$.

Finally, it remains to identify the map $\hm^q(P) \to \hm^q(A)$ induced by $d$ under all these isomorphisms. If we still denote it by $d$, then we compute
$$(d \alpha') (g_0, \ldots , g_q) = \alpha' (w_0\inv g_0, \ldots , w_0\inv g_q) - \alpha' (g_0, \ldots , g_q).$$
On the other hand, \emph{right} multiplication of any cochains on $G$ is $G$-homotopic to the identity (this well-known fact can be established e.g.\ as in~\cite[I\S7]{Guichardet_coho}). Therefore, the above map has the same effect on cohomology as
$$\alpha' (w_0\inv g_0 w_0, \ldots , w_0 \inv g_q w_0) - \alpha' (g_0, \ldots , g_q),$$
which amounts to $\mathrm{Ad}_{w_0} \mathrm{res} - \mathrm{res}$, wherein $\mathrm{res}$ denotes the restriction from $P$ to $A$ and $\mathrm{Ad}_{w_0}$ refers to the Weyl group action on $\hm^q(A)$. Since $w_0$ is the longest element, it acts by $-1$ on $A$. In particular, it acts trivially on $\hm^q(A)$ when $q$ is even and by $-1$ when $q$ is odd (e.g.\ because $\hm^q(A)$ is isomorphic to the dual of $\wedge^q\mathfrak a$). Thus, the map in the statement of the proposition vanishes indeed when $q$ is even. When $q$ is odd, we find $-2 \mathrm{res}$, which is an isomorphism by Proposition~\ref{prop:coh:Borel}.
\end{proof}

Since we shall need it again, we single out the following observation from the above proof. Note that now $\alpha$ is not assumed to be a cocycle, although $\omega$ is.

\begin{lem}\label{lem:twist:cochain}
Let $q$ be even and let
$$\omega\colon G^{q+1} \lra  \lol\left( (G/P)^2 \right)$$
be a cocycle in the homogeneous resolution for the right hand side in Proposition~\ref{prop:twist}. Suppose that $\omega=d\alpha$ for some $G$-equivariant map
$$\alpha\colon G^{q+1} \lra  \lol(G/P).$$
Then $\omega$ is trivial in cohomology.
\end{lem}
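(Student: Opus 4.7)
The plan is to revisit the proof of Proposition~\ref{prop:twist} at the cochain level, keeping track of the fact that $\alpha$ need not be a cocycle. First I would transport $\omega$ through the chain of cochain-level isomorphisms
$$C^\bu(G,\lol((G/P)^2))^G \ \cong\ C^\bu(G,\lol(G/MA))^G \ \cong\ C^\bu(MA,\RR)$$
from that proof, identifying it with the cocycle $\bar\omega(g_\bu) = \alpha'(w_0^{-1} g_\bu) - \alpha'(g_\bu)$ in $\lol(G^{q+1})^{MA}$, where $\alpha'(g_\bu) = \alpha(g_\bu)(eP)$ is the $P$-invariant cochain underlying $\alpha$ via Eckmann--Shapiro. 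By the compactness of $M$ and Proposition~\ref{prop:coh:Borel}, it suffices to show that $\bar\omega$ is a coboundary in $C^\bu(A,\RR)$.

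The key observation is that the hypothesis ``$\omega$ is a cocycle'' translates precisely to ``$\delta\alpha'$ is left $G$-invariant'' as a function on $G^{q+2}$, so $\delta\alpha'$ represents a class $[\delta\alpha'] \in \hc^{q+1}(G)$. I would then introduce the closed subgroup $\tilde A := A\cdot\langle w_0\rangle \subset N_G(A)$: since $w_0^2 \in A$, $A$ has index~$2$ in $\tilde A$, and a Hochschild--Serre argument (together with the vanishing of cohomology of finite groups with real coefficients) gives $\hc^\bu(\tilde A) = \hc^\bu(A)^{w_0}$, with the restriction $\hc^\bu(\tilde A) \to \hc^\bu(A)$ equal to the injective inclusion of $w_0$-fixed points. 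Since Corollary~\ref{cor:van:Borel} combined with Proposition~\ref{prop:coh:Borel} yields that $\hc^\bu(G) \to \hc^\bu(A)$ vanishes, the factoring through $\tilde A$ together with this injectivity forces $\hc^\bu(G) \to \hc^\bu(\tilde A)$ to vanish as well. Applied to $[\delta\alpha']$, this produces a primitive $\xi \in \lol(G^{q+1})^{\tilde A}$ with $\delta\xi = \delta\alpha'$.

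Now set $\widehat\alpha := \alpha' - \xi \in \lol(G^{q+1})^A$, which is an honest cocycle in $C^q(A,\RR)$. The enhanced invariance of $\xi$ gives $L_{w_0^{-1}}^*\xi = \xi$, so
$$\bar\omega\ =\ (L_{w_0^{-1}}^* - 1)\alpha'\ =\ (L_{w_0^{-1}}^* - 1)\widehat\alpha.$$
At this point the cohomology-level analysis from the proof of Proposition~\ref{prop:twist} applies directly: the chain map $L_{w_0^{-1}}^* - 1$ is chain-homotopic to the Weyl-action difference $c_{w_0^{-1}} - 1$ on $C^\bu(A,\RR)$ (via the standard homotopy between right multiplication by $w_0$ and the identity), and the induced map on $\hm^q(A) \cong \wedge^q\mathfrak a^*$ is zero for $q$ even. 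Hence $[\bar\omega] = (L_{w_0^{-1}}^* - 1)_*[\widehat\alpha] = 0$ in $\hm^q(A)$, so $\bar\omega$ is a coboundary in $C^\bu(A,\RR)$ and $\omega$ is trivial in $\hm^q(G,\lol((G/P)^2))$.

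The hard part is securing the primitive $\xi$ with $\tilde A$-invariance rather than merely $A$-invariance: this stronger invariance is precisely what kills $(L_{w_0^{-1}}^* - 1)\xi$ and reduces the non-cocycle case to the cocycle case already handled in Proposition~\ref{prop:twist}. The argument hinges on the injectivity of restriction from $\tilde A$ to $A$, which is not available for the ambient parabolic $P$ and is the reason the auxiliary subgroup $\tilde A$ is introduced.
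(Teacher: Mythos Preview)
Your argument is essentially correct, but one point needs a small repair: in general $w_0^2$ lies in $MA$ (indeed in $M$), not in $A$ --- already for $\varSL_2(\RR)$ one has $w_0^2=-I\in M\setminus A$. Thus $\tilde A=A\cdot\langle w_0\rangle$ has finite index over $A$, but not necessarily index~$2$. This does not damage the proof: finite index (with real coefficients) is all you use to get injectivity of the restriction $\hc^\bu(\tilde A)\to\hc^\bu(A)$, and $w_0\in\tilde A$ still gives $L_{w_0^{-1}}^*\xi=\xi$.

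That said, the paper's route is considerably shorter and avoids the auxiliary group $\tilde A$ and the appeal to Corollary~\ref{cor:van:Borel} altogether. The observation you are missing is that the anti-invariance under $w_0$ already holds \emph{at the cochain level for $\bar\omega$ itself}, without correcting $\alpha'$ to a cocycle. Indeed, $\alpha'$ is $P$-invariant and $w_0^{-2}\in MA\subset P$, so
\[
L_{w_0^{-1}}^*\bar\omega \;=\; L_{w_0^{-2}}^*\alpha' - L_{w_0^{-1}}^*\alpha' \;=\; \alpha' - L_{w_0^{-1}}^*\alpha' \;=\; -\bar\omega
\]
exactly. Passing to cohomology (where $L_{w_0^{-1}}^*$ agrees with $\mathrm{Ad}_{w_0}$) gives $\mathrm{Ad}_{w_0}[\bar\omega]=-[\bar\omega]$; since $q$ is even, $\mathrm{Ad}_{w_0}$ acts trivially on $\hm^q(A)$, and $[\bar\omega]=0$ follows immediately. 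Your construction of $\widehat\alpha$ is a valid but unnecessary detour: the $P$-invariance of $\alpha'$ already absorbs the square of $w_0$, so there is nothing to fix.
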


\begin{proof}
The above calculations show that $\omega = d\alpha$, viewed as an element of $\hm^q(A)$, satisfies $\mathrm{Ad}_{w_0} \omega = -\omega$. Since $w_0$ acts as the identity on cohomology for $q$ even, this shows that $\omega$ is trivial.
\end{proof}

Finally, we record what happens to Proposition~\ref{prop:twist} in the elementary case where we have even less points at infinity:

\begin{prop}\label{prop:rest:G-P}
The map
$$\hm^q(G) \lra \hm^q\left(G,  \lol (G/P)\right)$$
induced by the inclusion of constants $\RR\to\lol(G/P)$ vanishes for all $q>0$.
\end{prop}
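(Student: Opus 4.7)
The plan is to identify the map in the statement with the restriction map from $\hc^\bu(G)$ to $\hc^\bu(P)$, which is the content of Corollary~\ref{cor:van:Borel}.

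First, I would invoke the Eckmann--Shapiro isomorphism of Proposition~\ref{prop:induction} with $L=P$ and $V=\RR$, giving a natural identification
$$\hm^q\bigl(G, \lol(G/P)\bigr) \ \cong \ \hm^q(P).$$
At the cochain level, as already recalled in the proof of Proposition~\ref{prop:twist}, a $G$-equivariant homogeneous cochain $\alpha\colon G^{q+1}\to\lol(G/P)$ corresponds to the $P$-invariant homogeneous cochain $\alpha'(g_0,\ldots,g_q) = \alpha(g_0,\ldots,g_q)(eP)$.

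Next, I would track what the inclusion of constants $\RR\to\lol(G/P)$ does under this identification. If $\omega\colon G^{q+1}\to\RR$ is a $G$-invariant cochain, then composing it with the inclusion of constants yields the $G$-equivariant cochain $\alpha(g_0,\ldots,g_q) \equiv \omega(g_0,\ldots,g_q)$, whose value at $eP$ is precisely $\omega(g_0,\ldots,g_q)$ viewed as a $P$-invariant cochain. Thus the map in the statement coincides, under Eckmann--Shapiro, with the restriction map $\hm^q(G)\to\hm^q(P)$.

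Finally, I would appeal to the standard fact that for connected Lie groups with finite center, $\hm^\bu$ with trivial real coefficients agrees with $\hc^\bu$ (see e.g.\ Rem.~4.13 in~\cite{Wagemann-Wockel}), so that the restriction $\hm^q(G)\to\hm^q(P)$ is the continuous restriction $\hc^q(G)\to\hc^q(P)$. This map vanishes for $q>0$ by Corollary~\ref{cor:van:Borel}, concluding the proof. The only point requiring care is the cochain-level verification that Eckmann--Shapiro intertwines the constants inclusion with restriction; everything else is essentially bookkeeping.
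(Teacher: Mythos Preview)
Your proof is correct and follows exactly the same approach as the paper: use the induction isomorphism to identify the map with the restriction $\hm^q(G)\to\hm^q(P)$, then invoke Corollary~\ref{cor:van:Borel}. You simply spell out the cochain-level verification that the paper summarises as ``a much simpler computation than above.''
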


\begin{proof}
A much simpler computation than above shows that the induction isomorphism intertwines this map to the restriction from $\hm^q(G)$ to $\hm^q(P)$. The statement now follows from Corollary~\ref{cor:van:Borel}.
\end{proof}

\section{Modules with more points at infinity}

\begin{prop}\label{prop:proper}
Let $G$ be a connected semi-simple Lie group with finite center and $P<G$ a minimal parabolic subgroup. Then
$$\hm^q\left( G, \lol\left((G/P)^{p}\right) \right) \ = \ 0$$
when $p\geq 3$ and $q >0$.
\end{prop}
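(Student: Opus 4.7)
The plan is to show that for $p \geq 3$ the diagonal $G$-action on $(G/P)^p$ has compact principal isotropy, and then to combine a measurable orbit decomposition with the Eckmann--Shapiro lemma (Proposition~\ref{prop:induction}) and Lemma~\ref{lem:compact} to kill the higher cohomology. Concretely, I would look for a full-measure Zariski-open $G$-invariant subset $U \subseteq (G/P)^p$ on which all isotropy groups are conjugate to a fixed compact subgroup $H$. Note that this does \emph{not} claim a single open $G$-orbit: for $\rank(G) \geq 2$ there are typically continuous invariants (such as cross-ratios), so the orbit space $U/G$ is positive-dimensional.

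I would first verify compactness of the principal isotropy. For $p=3$, this refines the argument from the proof of Proposition~\ref{prop:twist}: a generic pair $(x_1,x_2)$ is Bruhat-opposite with stabiliser $MA$, and identifying the open cell $Pw_0 P / P$ with $N$, the $MA$-action becomes the adjoint representation on $\mathfrak{n}$. Since $\mathfrak{n}$ is a sum of positive root spaces and the positive roots span $\mathfrak{a}^*$ (by semisimplicity), a generic $v\in\mathfrak{n}$ has trivial $A$-stabiliser, so the $MA$-stabiliser of a generic third point lies in $M$ and is compact. For $p>3$ the generic stabiliser only shrinks further and remains compact.

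Next I would use a measurable $G$-equivariant cross-section of the orbit map on $U$ to produce an isomorphism $U \cong (U/G) \times G/H$ of $G$-spaces, with $G$ acting trivially on the first factor. Passing to function classes via Fubini yields
$$\lol\bigl((G/P)^p\bigr) \ \cong \ \lol\bigl(G/H,\, \lol(U/G)\bigr)$$
as $G$-modules, with $H$ acting trivially on the coefficient space $\lol(U/G)$. Proposition~\ref{prop:induction} then gives
$$\hm^q\bigl(G,\lol((G/P)^p)\bigr) \ \cong \ \hm^q\bigl(H,\lol(U/G)\bigr),$$
and the right-hand side vanishes for $q>0$ by Lemma~\ref{lem:compact} applied to the compact group $H$.

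The main technical obstacle is the existence of this measurable cross-section. In the algebraic setting it should follow from Rosenlicht-type quotient results applied to the principal isotropy stratum together with a standard Borel selection argument, but one has to verify that the relevant stratification really has full measure in $(G/P)^p$ and that the resulting decomposition is genuinely $G$-equivariant at the measurable level. Once that is granted, the rest is essentially formal from the tools assembled in Section~2.
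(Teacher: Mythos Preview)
Your argument for compactness of the generic stabiliser is essentially the paper's, and the endgame (Eckmann--Shapiro followed by Lemma~\ref{lem:compact}) is identical. The divergence is in the decomposition step.

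You posit a full-measure $G$-invariant set $U$ on which \emph{all} isotropy groups are conjugate to a single compact $H$, and then ask for a measurable trivialisation $U\cong (U/G)\times G/H$. That is a principal-orbit-type statement. It is standard for \emph{proper} actions via the slice theorem, but the $G$-action on $(G/P)^p$ is not obviously proper even on the compact-isotropy locus, and the Rosenlicht/Richardson results you invoke are cleanest over algebraically closed fields; over $\RR$ one must worry about a single $\CC$-conjugacy class splitting into several real ones. So there are really two gaps to close, not just the cross-section you flag: the single-isotropy claim itself needs justification.

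The paper bypasses both issues. It only uses that (i)~$G$ has at most countably many conjugacy classes of compact subgroups (Cartan's fixed-point theorem reduces to a fixed maximal compact, then Palais's countability result applies), and (ii)~the $G$-action on $(G/P)^p$ has locally closed orbits, hence is smooth in the Borel sense. The Glimm--Effros theorem then yields a measurable $G$-isomorphism of the compact-isotropy locus with a \emph{countable} disjoint union $\bigsqcup_j (G/K_j)\times X_j$, with the $K_j$ allowed to vary. The module $\lol((G/P)^p)$ becomes the product of the $\lol(G/K_j\times X_j)$, and one applies Proposition~\ref{prop:induction} and Lemma~\ref{lem:compact} factor by factor. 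This route never asks whether the isotropy type is constant, and the measurable section is supplied directly by Glimm--Effros rather than being left as an obstacle.
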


\begin{proof}
We claim that the stabiliser in $G$ of almost every point in $(G/P)^{p}$ is compact; it suffices to prove the claim for $p=3$. 

\begin{flushright}
\begin{minipage}[t]{0.85\linewidth}\itshape\small
\begin{flushright}
Then shalt thou count to three, no more, no less. Three shall be the number thou shalt count, and the number of the counting shall be three.
\end{flushright}
\begin{flushright}
\upshape\small
Book of Armaments, Chap.~2, verses 9--21
\end{flushright}
\end{minipage}
\end{flushright}

Consider a Langlands decomposition $P=MAN$, where $N$ is the unipotent radical of $P$ and $A$ is a maximal $\RR$-split torus of $A$ contained in $P$. The stabiliser of a point in $(G/P)^3$ is the intersection of three minimal parabolics $P_0, P_1, P_2$. 

As in the proof of Proposition~\ref{prop:twist}, we recall that the Bruhat decomposition gives a $P$-orbit $Pw_0P$ of full measure in $G/P$, where $w_0$ is the longest element of the Weyl group. Moreover, this cell can be written as $N w_0 P$, and actually $N$ parametrizes this cell; see e.g.~\cite[8.45]{Knapp02} or Cor.~1.9 in~\cite[IX\S1]{Helgason01}.

Upon conjugating, we may assume $P_0 = P$ and $P_1= w_0 P w_0\inv$; therefore $P_2$ can be parametrised by $n\in N$ as $P_2 = n P_1 n\inv$. Then $P_0 \cap P_1 = MA$ because they are opposite parabolics, and
%
%
%
$$P_0\cap P_2\ = \ n P_0 n\inv \cap n P_1 n\inv  \ =\ n MA n\inv.$$
Thus $P_0 \cap P_1 \cap P_2 = MA\cap n MA n\inv$. The claim now follows from the fact that, for generic $n\in N$, the intersection $A \cap  n A n\inv$ is trivial. Since $A$ normalizes $N$ with $A\cap N$ trivial, the latter fact reduces to the fact that $n$ has trivial centraliser in $A$ for generic $n$, which is apparent on the root space decomposition.

According to the claim, every $G$-orbit in $(G/P)^{p}$ is of the form $G/K$ for some compact subgroup $K<G$. There are at most countably many conjugacy classes in $G$ of such subgroups. Indeed, by Cartan's fixed point theorem we can assume that $K$ belongs to some fixed maximal compact subgroup $K_0$, and even within a compact Lie group conjugacy has countably many classes, see e.g.\ Cor.~1.7.27 in~\cite{Palais60}. Moreover, the $G$-action on $(G/P)^{p}$ is smooth in the Borel sense since it has locally closed orbits (see e.g.~\cite[\S3]{Zimmer84}). In conclusion, the Glimm--Effros theorem (see Thm.~2.9 in~\cite{Effros65}) implies that there is a measurable $G$-isomorphism between $(G/P)^{p}$ and a disjoint countable union of $G$-spaces of the form $G/K_j \times X_j$, where $K_j<G$ is a compact subgroup and $X_j$ is some measure space with trivial $G$-action.

It follows that $\lol \left((G/P)^{p}\right)$ is the (unrestricted) product over $j$ of the $G$-modules $\lol (G/K_j \times X_j)$. Therefore, to trivialize any class in $\hm^q\left( G, \lol\left((G/P)^{p}\right) \right)$, it suffices to show that each $\hm^q\left(G, \lol (G/K_j \times X_j)\right)$ vanishes, recalling that $q >0$. Applying the induction isomorphism (Proposition~\ref{prop:induction}), this amounts to the vanishing of $\hm^q\left(K_j, \lol(X_j)\right)$, which holds by virtue of Lemma~\ref{lem:compact}.
\end{proof}

\section{Proof of the theorem}

For the remaining of the text, let $G$ be a connected semi-simple Lie group with finite center. Choose a maximal $\RR$-split torus $A<G$ and a minimal parabolic subgroup $P<G$ containing $A$ and consider the Langlands decomposition $P=MAN$, where $N$ is the unipotent radical of $P$.

The strategy is to work with the hypercohomology spectral sequence associated to the (augmented) complex of $G$-modules $\lol\left((G/P)^{q}\right)$. Since we will need explicit computations of higher differentials, we give a complete description from scratch, as follows. Consider the bi-complex given for $p,q\geq 0$ by the $G$-invariants
$$C^{p,q} = \lol\left(G^{p+1} \times (G/P)^{q} \right)^G.$$
The two differential maps are given by the homogeneous differentials on the variables in $G^{p+1}$ and $(G/P)^{q+1}$ respectively, up to a sign convention. To minimize confusion, we denote them by $\difv, \difh$ respectively. Specifically, given an element in $C^{p,q}$, we define its first differential in $C^{p+1,q}$ under the Fubini isomorphism by
$$\difv\colon \lol\left(G^{p+1} , \lol\left( (G/P)^{q} \right)\right)^G \lra \lol\left(G^{p+2} , \lol\left( (G/P)^{q} \right)\right)^G$$
Thus, this is the homogeneous resolution for
$$\hm^p\left(G, \lol\left((G/P)^{q} \right)\right).$$
The second differential $\difh$ is defined analogously on the variables in $G/P$ by considering the homogeneous differential
$$\lol\left((G/P)^{q}, \lol\left( G^{p+1} \right)\right)^G \lra \lol\left((G/P)^{q+1} , \lol\left(  G^{p+1}\right)\right)^G$$
but additionally it is affected with the sign $(-1)^{p+1}$.

A bi-complex is a standard setup for first quadrant spectral sequences; classical references compatible with our notations are~\cite[III.7]{Gelfand-Manin} or~\cite[III.14]{Bott-Tu}. In particular we have two spectral sequences $\ei, \eii$ starting with the second, respectively first, differential defined above.

The first computation is routine:

\begin{prop}
These spectral sequences converge to zero.
\end{prop}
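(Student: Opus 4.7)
The plan is to show that the $\ei$ spectral sequence, which takes horizontal cohomology first, already has an identically vanishing $E_1$ page. Since both $\ei$ and $\eii$ abut to the total cohomology of this first-quadrant bi-complex, this forces the total cohomology to vanish, whence $\eii$ also converges to zero.

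First I would rewrite each $C^{p,q}$ in ``bar'' coordinates. The measurable change of variables
$$(g_0,g_1,\ldots,g_p,x_1,\ldots,x_q)\ \longmapsto\ \bigl(g_0,\ g_0^{-1}g_1,\ldots,g_0^{-1}g_p,\ g_0^{-1}x_1,\ldots,g_0^{-1}x_q\bigr)$$
is a $G$-equivariant isomorphism of standard measure spaces that carries the diagonal $G$-action to the left regular action on the $g_0$-factor alone; combined with Fubini this gives Polish isomorphisms
$$C^{p,q}\ \cong\ \lol\bigl((G/P)^{q},\,V\bigr),\qquad V := \lol(G^{p}),$$
under which $\difh$ (up to the inessential overall sign $(-1)^{p+1}$) becomes precisely the standard homogeneous cochain differential on $(G/P)^{q}$ with coefficients in the Polish module $V$. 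In particular the first arrow $C^{p,0}\to C^{p,1}$ is the inclusion of constants $V\hookrightarrow \lol(G/P,V)$.

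Thus the $p$-th row of the bi-complex is identified with
$$V\ \lra\ \lol(G/P,V)\ \lra\ \lol\bigl((G/P)^{2},V\bigr)\ \lra\ \cdots,$$
which is exact by Lemma~\ref{lem:acyclic} applied to $X=G/P$. Hence $\ei_1^{p,q}=0$ for every $p,q\ge 0$, so the total cohomology of the bi-complex vanishes and both spectral sequences converge to zero. The only genuinely delicate step is verifying that the bar-coordinate isomorphism actually intertwines $\difh$ with the $V$-valued homogeneous differential; this is a routine computation once one writes the invariant element as $f(g_0,\ldots,g_p,x_1,\ldots,x_q)=\bar f(g_0^{-1}g_1,\ldots,g_0^{-1}g_p,g_0^{-1}x_1,\ldots,g_0^{-1}x_q)$ and applies the alternating-sum formula.
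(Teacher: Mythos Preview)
Your proposal is correct and follows essentially the same approach as the paper: both show that the rows of the bi-complex are exact after passing to the inhomogeneous (bar) model, so that $\ei_1^{p,q}=0$ identically and hence both spectral sequences abut to zero. The only cosmetic difference is that the paper writes the row complex as $\lol\bigl(G^p,\lol((G/P)^q)\bigr)$ and invokes Lemma~\ref{lem:acyclic} for $V=\RR$ followed by Lemma~\ref{lem:exact}, whereas you write it as $\lol\bigl((G/P)^q,V\bigr)$ with $V=\lol(G^p)$ and apply Lemma~\ref{lem:acyclic} directly; these are equivalent via the Fubini isomorphism.
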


\begin{proof}
The point is that the first spectral sequence collapses immediately (and the limits always coincide since they both compute the cohomology of the total complex). Explicitly, $\ei_1^{p,q}$ is by definition the cohomology of the second differential 
$$C^{p,q-1}\lra C^{p,q} \lra C^{p,q+1}$$
as defined above (with the convention $C^{p,-1}=0$). Using the isomorphism with the \emph{inhomogeneous} model for fixed $p$, this amounts to
$$\lol\left(G^{p}, \lol ((G/P)^{q-1}) \right) \lra \lol\left(G^{p}, \lol ((G/P)^{q}) \right) \lra \lol\left(G^{p} , \lol((G/P)^{q+1} )\right)$$
(with still the \emph{homogeneous} differential for $q$, up to a sign). By Lemma~\ref{lem:acyclic}, the sequence with homogeneous differential
$$0 \lra \RR \lra \lol (G/P) \lra \lol\left((G/P)^{2}\right) \lra \lol\left((G/P)^{3} \right) \lra \cdots$$
is exact. This then implies, by Lemma~\ref{lem:exact}, that $\ei_1^{p,q}$ is trivial for all $q$.
\end{proof}

From now on we only work with the second spectral sequence $\eii$, which we simply denote by $\ee$. Recall that $\ee_1^{p,q}$ is defined by the first differential, but by convention $p$ and $q$ are permuted to that in view of our definitions we have $\ee_1^{p,q} = \hm^q\left(G, \lol\left((G/P)^{p} \right)\right)$. Recall also that the differential $d_r$ on page $r\geq 1$, which defines $\ee_{r+1}$, maps $\ee_r^{p,q}$ to $\ee_r^{p+r,q-r+1}$ and that $d_1$ is induced by $\difh$.

The results of the previous sections give us the following information.

\begin{prop}\label{prop:E2}
\ 
  \begin{enumerate}[(i)]
\item $\ee_1^{p,q}=0$ for all $p\geq 3$ and $q >0$.\label{pt:proper}
\item  $\ee_2^{0,q}= \ee_1^{0,q} = \hm^q(G)$ for all $q >0$.\label{pt:G}
\item $\ee_2^{p,q}=0$ for $p= 1,2$ and all odd $q$.\label{pt:odd}
\item $\ee_2^{1,q}=\ee_1^{1,q}\cong \hm^q(P)$ and $\ee_2^{2,q}=\ee_1^{2,q} \cong \hm^q(A)$ for all even $q$.\label{pt:even}
\item $\ee_2^{p+1,0}$ is the cohomology of the $\difv$-complex $\lol\left((G/P)^{p+1}\right)^G$ for all $p>0$. \label{pt:G/P}
  \end{enumerate}
\end{prop}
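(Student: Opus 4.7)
The plan is to combine the preceding propositions via a direct computation at the low columns of the spectral sequence $\ee$. Point (i) is Proposition~\ref{prop:proper} restated in spectral-sequence language. Point (v) is a matter of unpacking the definitions: since $\hm^0(G,V)=V^G$ and since $d_1$ is induced by $\difh$, the bottom row $\ee_1^{\bu,0}$ is precisely the homogeneous $(G/P)$-complex of $G$-invariants $\lol((G/P)^{\bu})^G$, and taking its cohomology at position $p+1\ge 2$ is unaffected by the augmentation $\RR$ sitting at column $0$, so it agrees with the cohomology of the complex whose first term is $\lol(G/P)^G$.

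For points (ii), (iii) and (iv) I would restrict to $q>0$ and examine only the columns $p=0,1,2,3$. The coefficients are identified as follows. Since $(G/P)^0$ is a point, $\ee_1^{0,q}=\hm^q(G)$. Eckmann--Shapiro (Proposition~\ref{prop:induction}) gives $\ee_1^{1,q}\cong\hm^q(P)$. For $\ee_1^{2,q}$, the Bruhat analysis performed inside the proof of Proposition~\ref{prop:twist} identifies $\lol((G/P)^2)$ with the $G$-module $\lol(G/MA)$, so that Eckmann--Shapiro yields $\hm^q(MA)$, which collapses to $\hm^q(A)$ by compactness of $M$. The $d_1$ differentials are already in hand: $d_1\colon\ee_1^{0,q}\to\ee_1^{1,q}$ is induced by the inclusion of constants and vanishes for $q>0$ by Proposition~\ref{prop:rest:G-P}; $d_1\colon\ee_1^{1,q}\to\ee_1^{2,q}$ is exactly the map of Proposition~\ref{prop:twist}, hence an isomorphism for odd $q$ and zero for even $q$; and the outgoing $d_1$ from column $2$ lands in $\ee_1^{3,q}=0$ by (i).

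The remainder is pure book-keeping. Point (ii) is immediate because both the incoming and outgoing $d_1$ at $\ee_1^{0,q}$ vanish. For odd $q$, the isomorphism $\ee_1^{1,q}\cong\ee_1^{2,q}$ has trivial kernel and is surjective, so $\ee_2^{1,q}=\ee_2^{2,q}=0$, which is (iii). For even $q$, this same map vanishes, leaving $\ee_2^{1,q}=\ee_1^{1,q}$ (the incoming $d_1$ is zero) and $\ee_2^{2,q}=\ee_1^{2,q}$ (the outgoing $d_1$ is zero), which is (iv). There is no genuine obstacle here: the proposition is a careful transcription of Propositions~\ref{prop:proper}, \ref{prop:induction}, \ref{prop:twist}, and \ref{prop:rest:G-P} into the spectral-sequence language, and the only non-formal input---namely the identification $\ee_1^{2,q}\cong\hm^q(A)$---has already been secured inside the proof of Proposition~\ref{prop:twist} via the Bruhat decomposition.
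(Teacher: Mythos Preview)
Your proof is correct and follows essentially the same route as the paper: each point is obtained by reading off the earlier propositions (\ref{prop:proper}, \ref{prop:rest:G-P}, \ref{prop:twist}) at the appropriate places in the row $q$ of $\ee_1$, with the identifications $\ee_1^{1,q}\cong\hm^q(P)$ and $\ee_1^{2,q}\cong\hm^q(A)$ coming from induction and the Bruhat analysis already carried out for Proposition~\ref{prop:twist}. The only difference is expository---you spell out the Eckmann--Shapiro identifications and the book-keeping for the augmentation column more explicitly than the paper does.
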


\begin{proof}
Point~\eqref{pt:proper} is Proposition~\ref{prop:proper}.

Point~\eqref{pt:G} follows from Proposition~\ref{prop:rest:G-P}.

For point~\eqref{pt:odd}, fix $q$ odd and consider the complex defining $\ee_2^{p,q}$ for various $p$:
$$0 \lra \ee_1^{0,q}  \lra \ee_1^{1,q}  \lra \ee_1^{2,q}  \lra \ee_1^{3,q} \lra \cdots$$
We have already noted that $\ee_1^{0,q}  \to \ee_1^{1,q}$ is the zero map and that $\ee_1^{3,q}$ vanishes. Thus the point is equivalent to saying that $\ee_1^{1,q}  \to \ee_1^{2,q}$ is an isomorphism, which was established in Proposition~\ref{prop:twist}.

Similarly, Proposition~\ref{prop:twist} implies point~\eqref{pt:even} in even degree.

Finally, point~\eqref{pt:G/P} follows from $\ee_1^{p+1,0} = \hm^0\left(G, \lol\left((G/P)^{p+1}\right)\right)$.
\end{proof}

The last ingredient that we need in order to understand the higher differentials is:

\begin{prop}\label{prop:d2}
The map $d_2\colon \ee_2^{0,q+1} \to \ee_2^{2,q}$ vanishes for all even $q$.
\end{prop}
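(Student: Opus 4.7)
The plan is to unpack $d_2$ via the standard zig-zag in the bi-complex $C^{\bullet,\bullet}$ and then recognise the resulting cocycle as falling exactly under the hypotheses of Lemma~\ref{lem:twist:cochain}.

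To begin, I would represent a class in $\ee_2^{0,q+1} = \hm^{q+1}(G)$ by some $\omega \in \lol(G^{q+2})^G$ with $\difv\omega=0$, viewed as sitting at bi-degree $(q+1,0)$ in the bi-complex, noting that $(G/P)^0$ is a point so $C^{q+1,0}=\lol(G^{q+2})^G$. The first half of the zig-zag produces $\difh\omega \in C^{q+1,1} = \lol(G^{q+2},\lol(G/P))^G$, which is simply $\omega$ regarded as a constant function along $G/P$. The class $[\difh\omega] \in \ee_1^{1,q+1} = \hm^{q+1}(G,\lol(G/P))$ is exactly the image of $[\omega]$ under the map of Proposition~\ref{prop:rest:G-P}, which vanishes since $q+1>0$. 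I can therefore select $\eta_0 \in C^{q,1} = \lol(G^{q+1},\lol(G/P))^G$ with $\difv\eta_0 = \difh\omega$.

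Completing the zig-zag, $d_2[\omega] = [\difh\eta_0] \in \ee_2^{2,q}$, where $\difh\eta_0 \in C^{q,2}$ is automatically $\difv$-closed by the anti-commutation computation $\difv\difh\eta_0 = -\difh\difv\eta_0 = -\difh^2\omega = 0$. The key observation is that, up to the fixed overall sign $(-1)^{q+1}$ in the definition of $\difh$, the map $\difh\colon C^{q,1}\to C^{q,2}$ is nothing other than post-composition with the $G$-equivariant map $d\colon\lol(G/P)\to\lol((G/P)^2)$, $f \mapsto ((x,y)\mapsto f(y)-f(x))$, of Proposition~\ref{prop:twist}. Thus $\difh\eta_0 = \pm d\circ\alpha$ for the $G$-equivariant map $\alpha = \eta_0\colon G^{q+1}\to\lol(G/P)$, which is precisely the situation of Lemma~\ref{lem:twist:cochain}. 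Since $q$ is even, that lemma yields $[\difh\eta_0]=0$ in $\hm^q(G,\lol((G/P)^2)) = \ee_1^{2,q}$, and hence a fortiori in the quotient $\ee_2^{2,q}$.

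The main potential obstacle is purely bookkeeping: verifying that the standard zig-zag formula really gives $d_2$ in our setting, tracking the signs in $\difh$, and checking that the identifications via induction (Proposition~\ref{prop:induction}) and the Bruhat big cell intertwine $\ee_1^{2,q}$ with $\hm^q(A)$ in a way compatible with the above description. Once that is done, Lemma~\ref{lem:twist:cochain} is tailor-made for the conclusion and no further substantive input is needed; in particular, the parity hypothesis on $q$ enters only through that lemma, via the triviality of the $w_0$-action on $\hm^q(A)$ for even $q$.
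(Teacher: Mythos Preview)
Your proposal is correct and follows essentially the same approach as the paper: perform the standard zig-zag in the bi-complex to represent $d_2[\omega]$ by $\difh\eta_0$ with $\eta_0\in C^{q,1}$, then invoke Lemma~\ref{lem:twist:cochain} (using that $q$ is even) to conclude triviality. The paper's proof differs only in notation (it calls your $\omega$ by $\eta$ and your $\eta_0$ by $\alpha$) and omits the explicit reference to Proposition~\ref{prop:rest:G-P} for the first step of the zig-zag, simply noting that $[\eta]$ lies in the kernel of $d_1$.
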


(In the special case $q=0$, our identifications above already imply that both $\ee_2^{0,1}$ and $\ee_2^{2,0}$ vanish.)

\begin{proof}[Proof of Proposition~\ref{prop:d2}]
Let $\eta\in C^{q+1, 0}$ be a cocycle representing an element of $\ee_2^{0,q+1}= \ee_1^{0,q+1} = \hm^{q+1}(G)$. We recall how to obtain an element $\omega\in C^{q, 2}$ representing the image $d_2[\eta]$ in $\ee_2^{2,q}=\ee_1^{2,q}\cong \hm^q(A)$:

The fact that $[\eta]$ is in the kernel of $\ee_1^{0,q+1} \to \ee_1^{1,q+1}$ means that $\difh \eta$ is a coboundary. Thus there is $\alpha\in  C^{q, 1}$ such that $\difv \alpha = \difh \eta$. Consider the element $\omega = \difh\alpha$ of $C^{q, 2}$. This is a cocycle because the bi-complex structure implies
$$\difv\omega \ = \ \difv\difh\alpha \ = \ - \difh\difv\alpha \ =\  - \difh \difh \eta \ = \ 0.$$
Now Lemma~\ref{lem:twist:cochain} shows that $[\omega]$ vanishes indeed in $\ee_1^{2,q}$.
\end{proof}

Summarising Propositions~\ref{prop:E2} and~\ref{prop:d2}, the third page $\ee_3$ vanishes except possibly at the following coordinates:

\begin{itemize}
\item $\ee_3^{0, q} = \hm^q(G)$ for all $q>0$;
\item $\ee_3^{p+1,0}$ is the cohomology of the $\difv$-complex $\lol\left((G/P)^{p+1}\right)^G$ for all $p>0$;
\item $\ee_3^{1,q}\cong \hm^q(P)$ and $\ee_3^{2,q}\cong \hm^q(A)$ for all even $q>0$.
\end{itemize}

\noindent
Since the spectral sequence converges to zero, the most immediate consequence is that $d_3$ establishes an isomorphism between $\hm^2(G)$ and $\hm^2(G;P)$. Further consequences are that for every even $q>0$:

\begin{itemize}
\item $d_{q+1}$ gives embeddings of $\hm^q(P)$ into $\hm^{q+1}(G;P)$ and of $\hm^q(A)$ into $\hm^{q+2}(G;P)$;
\item $d_{q+2}$ yields an isomorphism between $\hm^{q+1}(G)$ and the cokernel of $\hm^q(P)$ in $\hm^{q+1}(G;P)$;
\item $d_{q+3}$ yields an isomorphism between $\hm^{q+2}(G)$ and the cokernel of $\hm^q(A)$ in $\hm^{q+2}(G;P)$.
\end{itemize}

This completes the proof of Theorem~\ref{thm:main} if we recall that $\hm^q(P) \cong\hm^q(A)$ by Proposition~\ref{prop:coh:Borel}. Indeed, since $A\cong \RR^r$, its cohomology is the exterior power $\wedge^\bu \RR^r$ (after identifying it with its dual).


\bibliographystyle{amsalpha}
\bibliography{../BIB/ma_bib}

\end{document}